\documentclass[11pt]{article}
\usepackage{geometry}
\usepackage{amsmath, amsthm, amssymb, mathrsfs}
\usepackage{graphicx, caption, float, multirow, makecell}

\usepackage[T1]{fontenc}

\usepackage{fancyhdr}

\usepackage[hidelinks]{hyperref}
\usepackage[dvipsnames]{xcolor}
\hypersetup{colorlinks=true}

\usepackage[title]{appendix}

\allowdisplaybreaks

\newtheoremstyle{theorem}
  {10pt}
  {10pt}
  {\sl}
  {\parindent}
  {\bf}
  {. }
  { }
  {}
\theoremstyle{theorem}
\newtheorem{theorem}{Theorem}
\newtheorem{corollary}{Corollary}

\begin{document}
\title{\textbf{Distribution-free testing of linear type}}
\author{Weixuan Xia\thanks{School of Data, Mathematical, and Statistical Sciences, University of Central Florida. \newline\indent\hspace{0.5em} Email: \underline{weixuan.xia@ucf.edu}}}
\date{2026}
\maketitle

\begin{abstract}
  We introduce a distribution-free goodness-of-fit test, termed the omega-1 test, which naturally complements the Kolmogorov--Smirnov test and Cram\'{e}r--von Mises test and can be viewed as their (piecewise) linear analog. Defined as an $\mathrm{L}^{1}$-functional of the empirical process, the test statistic improves on balancing sensitivity to localized and diffuse alternatives and gives a robust and interpretable measure of distributional discrepancy, apart from close connections to the Wasserstein 1-distance. For finite samples, we derive a finite-dimensional computational form for the statistic under general conditions, which leads to various explicit formulas for its null distribution. Under mild continuity assumptions, the limiting statistic is distribution-free, with explicit distribution formulas. In composite settings, the statistic is also compatible with the Khmaladze transformation, enabling asymptotically distribution-free testing. The limiting transformed statistic also has an explicit distribution that escapes reliance on intractable compensator processes or purely numerical evaluation. Simulation results indicate rapid convergence of the finite-sample distributions to their limiting counterparts and support the practical applicability of the test. \medskip\\
  \textbf{MSC2020 Classifications:} 62G10; 62G30; 60E05 \medskip\\
  \textbf{Keywords:} Omega-1 test; goodness of fit; Khmaladze transformation; explicit formulas
\end{abstract}

\newcommand{\dd}{{\rm d}}
\newcommand{\pd}{\partial}
\newcommand{\ii}{{\rm i}}
\newcommand{\PP}{\mathbb{P}}
\newcommand{\E}{\mathbb{E}}
\newcommand{\Var}{\mathrm{Var}}
\newcommand{\Ai}{\mathrm{Ai}}
\newcommand{\Gf}{\mathrm{\Gamma}}
\newcommand{\K}{\mathrm{K}}
\newcommand{\F}{\mathrm{F}}
\newcommand{\G}{\mathrm{G}}
\newcommand{\D}{\mathrm{D}}
\newcommand{\erfc}{\mathrm{erfc}}
\newcommand{\sgn}{\mathrm{sgn}}

\medskip

\section{Introduction}\label{sec:1}

The classical Kolmogorov--Smirnov (KS) and Cram\'{e}r--von Mises (CvM) (or omega-2) tests have occupied a central place in nonparametric goodness-of-fit theory. Both tests are based on the empirical distribution function and compare it to some hypothesized distribution, whereas they also possess notable differences in how their discrepancies are aggregated -- the KS statistic takes a supremum norm (maximum deviation), whereas the CvM statistic integrates the squared deviation. Precisely, if $X=\{X_{1},X_{2},\dots,X_{n}\}\subset\mathbb{R}$ is a sample of \text{i.i.d.} observations, whose empirical distribution function is
\begin{equation*}
  F_{n}(x):=\frac{1}{n}\sum^{n}_{j=1}\mathbf{1}(X_{j}\leq x),\quad x\in\mathbb{R},
\end{equation*}
and suppose that $F_{\theta}$ is a given hypothesized distribution function supported on $\mathbb{R}$ with given parametrization $\theta\in\Theta$. Then, in their standard form, the KS and CvM tests are associated with the null hypothesis that the sample $X$ is drawn from the hypothesized distribution, $\mathrm{H}_{0}:F_{n}=F_{\theta}$, and are directly built from the so-called empirical process,
\begin{equation}\label{EP}
  V_{n,\theta}(x):=\sqrt{n}(F_{n}(x)-F_{\theta}(x)),\quad x\in\mathbb{R}.
\end{equation}

Under the additional assumption that $F_{\theta}$ is absolutely continuous, the KS and CvM tests are considered distribution-free under $\mathrm{H}_{0}$, which explains their enduring appeal in both theoretical and applied statistics. The underlying rationale is to construct a test statistic through a functional of $V_{n,\theta}$ as a statistical distance between $F_{n}$ and $F_{\theta}$. In terms of general $\mathrm{L}^{p}$-functionals for $p\in[1,\infty]$, this test statistic can be represented as the powered Stieltjes integral,
\begin{equation}\label{varpi}
  \varpi^{p}_{n}:=\bigg(\int_{\mathbb{R}}|V_{n,\theta}(x)|^{p}\dd F_{\theta}(x)\bigg)^{1/p}.
\end{equation}
Then, the KS and CvM test statistics correspond to $\varpi^{\infty}_{n}$ (as a proper limit) and $\omega^{2}_{n}=(\varpi^{2}_{n})^{2}$, respectively.

Historically, the importance of the KS and CvM tests comes from providing fully nonparametric, omnibus tests that are sensitive to a wide range of alternatives (such as $\chi^{2}$ tests). The KS test, in particular, is widely implemented across statistical software and used in one- and two-sample settings (\text{cf.} \cite{BZ}, \cite{D}, \cite{MTW}, \cite{SL}), while CvM-type criteria also appear in minimum-distance estimation and related inference procedures (\text{cf.} \cite{A}, \cite{CF}, \cite{D}, \cite{HK2}, \cite{OH}). Over time, both statistics have been extended in numerous directions, including to discrete data, censored data, rank data, and multivariate settings where direct generalization is nontrivial (e.g., via projections or space-filling curves) -- see, e.g., \cite{AE}, \cite{BLFGG}, \cite{CDS}, \cite{JPZ}, \cite{OP-G}, \cite{S1}.

In modern applications, these tests continue to play a significant role beyond classical parametric model checking. This include signal processing and communications, where they appear in empirical likelihood and detection problems (\cite{KLT}, \cite{L}), and in high-dimensional and functional data analysis, where CvM-type statistics are embedded into projection-based or U-statistics to mitigate the curse of dimensionality (\cite{C-AG-PF-B}, \cite{G-PG-MF-B}). In addition, in machine learning, goodness-of-fit ideas are critical for model criticism, generative model evaluation, and distributional two-sample testing; see, e.g., \cite{PCGT}, \cite{SL-MD}, \cite{TT}, \cite{WST} -- while KS-type distances are sometimes used as simple diagnostics, CvM-type integrated discrepancies are closely related to kernel-based methods and integral probability metrics. More recent work also explores smoothed or kernelized versions of the statistics to improve finite-sample behavior and boundary effects, as done in \cite{FM}.

Their versatility notwithstanding, the KS and CvM tests reflect two relatively extreme scenarios, based on the $\mathrm{L}^{\infty}$- and $\mathrm{L}^{2}$-functionals for deviation aggregation, respectively, and this naturally motivates considering an intermediate linear $\mathrm{L}^{1}$-functional, i.e., with $p=1$ in (\ref{varpi}),
\begin{equation*}
  \varpi^{1}_{n}:=\int_{\mathbb{R}}|V_{n,\theta}(x)|\dd F_{\theta}(x).
\end{equation*}
Such a choice has several conceptual and practical advantages. Compared to the KS statistic ($\varpi^{\infty}_{n}$), it reduces the dominance of a single worst-case deviation, while compared to the CvM statistic ($(\varpi^{2}_{n})^{2}$), it also avoids squaring, which can overweight large deviations and underweight moderate but widespread discrepancies. In this sense, $\varpi^{1}_{n}$ is expected to improve robustness to localized irregularities or noise in data by detecting both global and moderately localized deviations and, in so doing, provide a more balanced sensitivity profile.

Besides, from a theoretical perspective, replacing the $\mathrm{L}^{2}$-functional with the $\mathrm{L}^{1}$-functional results in a shift of the asymptotic object under $\mathrm{H}_{0}$ from the square integral of a Gaussian process (or the Brownian bridge in the case of $\varpi^{2}_{n}$ as $n\to\infty$) to its absolute-value integral, which typically exhibits dissimilar tail behaviors -- compare \cite{T1}, \cite{T2}, \cite{X1}, and \cite{XZ} about the asymmetric leptokurtic feature, which can have implications for statistical power against certain alternatives (e.g., sparse \text{vs.} diffuse deviations). From a computational viewpoint, $\mathrm{L}^{1}$-type discrepancies are closely related to Wasserstein distances and optimal transport, which have recently become central in generative modeling and distribution comparison (e.g., see \cite{BFT}, \cite{DBGM}, \cite{PZ}). In this regard, ``linearizing'' the CvM functional also provides a meaningful link between classical goodness-of-fit tests with modern distributional metrics used in machine learning.

With the above considerations, it is natural to refer to the $\mathrm{L}^{p}$-functional-based test as an ``omega-$p$'' test, with test statistic
\begin{equation}\label{Op}
  \omega^{p}_{n}=(\varpi^{p}_{n})^{p},
\end{equation}
and in particular, $\omega^{1}_{n}=\varpi^{1}_{n}$. In Section \ref{sec:2} of this paper, we analyze key properties of this statistic, including the derivation of finite-dimensional representations in the general case of (\ref{Op}), along with various explicit distribution formulas for $p=1$ permitting efficient numerical computation.



\smallskip

Noteworthily, the classical KS and CvM tests are fundamentally reliant on a fully specified null hypothesis $\mathrm{H}_{0}$ under which a distribution-free limit can be guaranteed. However, in practice, one oftentimes faces situations where additional parametric hypotheses about $\theta$ must be verified  (as $\hat{\theta}_{n}$) from the sample $X$ along with $\mathrm{H}_{0}$, with which the latter becomes a partially specified, composite hypothesis. It is well-known that in such situations, the corresponding test statistics are no longer distribution-free, whose null distributions have general dependence on $F_{\hat{\theta}_{n}}$. A powerful tool to resolve this obstacle is the so-called Khmaladze transformation, originally introduced in \cite{K1} and \cite{K2}. Formally, it is constructed as a martingale (or innovation) transformation of the empirical process $V_{n,\theta}$.

More specifically, given $X$, let $L(\theta;X)$ denote the likelihood function associated with $F_{\theta}$, with $|\theta|=m$ parameters, and let
\begin{equation*}
  s(\theta;x):=\nabla_{\theta}\log f_{\theta}(x),\quad \mathcal{I}_{\theta}:=\mathrm{Hess}_{\theta}\log f_{\theta}(x)
\end{equation*}
denote, respectively, the score function and the (incomplete) Fisher information matrix. Then, the transformed empirical process is defined by
\begin{equation}\label{TEP}
  \widetilde{V}_{n,\theta}(x):=V_{n,\theta}(x)-\mathcal{K}_{n,\theta}(x),\quad x\in\mathbb{R},
\end{equation}
with the compensator
\begin{equation}\label{Comp}
  \mathcal{K}_{n,\theta}(x):=\int^{x}_{-\infty}(1,s^{\intercal}_{\theta}(y))\mathcal{I}^{-1}_{\theta}(y) \int^{\infty}_{y}(1,s^{\intercal}_{\theta}(z))^{\intercal}\dd V_{n,\theta}(z)\dd F_{\theta}(y).
\end{equation}
In the above definition, it is clear that $\widetilde{V}_{n,\theta}$ exhibits general dependence on $F_{\theta}$, and so do test statistics built therefrom, including supremum or quadratic functionals. For more general $\mathrm{L}^{p}$-functionals, we shall denote these (distribution-dependent) statistics by
\begin{equation}\label{Wp}
  w^{p}_{n,\theta}=\bigg(\int_{\mathbb{R}}|\widetilde{V}_{n,\theta}(x)|^{p}\dd F_{\theta}(x)\bigg)^{1/p},
\end{equation}
in companion with (\ref{varpi}).

The main significance of the Khmaladze transformation lies in the fact as $n\to\infty$, the transformed process $\widetilde{V}_{n,\theta}$ regains its distribution-free limit, enabling valid and robust testing under the composite null hypothesis ($\mathrm{H}_{0}$) without resorting to resampling or tabulating model-specific critical values. Because of this feature, the Khmaladze transformation has been particularly influential in econometrics, survival analysis, and time-series specification testing, where parameter estimation is unavoidable; see, e.g., \cite{HK1}, \cite{K3}, \cite{ZA}.

Section \ref{sec:3} this paper aims to shed some light on the limit distribution of (\ref{Wp}) in the particular case $p=1$. As the Khmaladze transformation is not tailored to any particular hypothesis test but rather serves as a method for modifying the empirical process, our goal is not to derive explicit finite-sample distributions for specific underlying distributions $F_{\theta}$; instead, it is to highlight the desirable property that applying the Khmaladze transformation in conjunction with the newly proposed $\mathrm{L}^{1}$-functional-based test remains compatible and is expected to produce meaningful testing outcomes.

\medskip

\section{Constructing omega-1 test statistics}\label{sec:2}

\subsection{Finite-sample representations}\label{sec:2.1}

Starting with the aforementioned \text{i.i.d.} sample $X=\{X_{1},X_{2},\dots,X_{n}\}\subset\mathbb{R}$, under the null hypothesis $\mathrm{H}_{0}$, the transformed sample $F_{\theta}(X)=\{F_{\theta}(X_{1}),F_{\theta}(X_{2}),\dots,F_{\theta}(X_{n})\}$ contains \text{i.i.d.} standard uniformly distributed random variables, denoted as $F_{\theta}(X_{j})=U_{j}$, $j=1,2,\dots,n$. Assuming that $F_{\theta}$ is absolutely continuous with fixed $\theta$, we can define the corresponding uniform empirical process
\begin{equation}\label{G}
  G_{n}(t):=\sqrt{n}(\breve{F}_{n}(t)-t),\quad t\in[0,1],
\end{equation}
where
\begin{equation}\label{TECDF}
  \breve{F}_{n}(t):=\frac{1}{n}\sum^{n}_{j=1}\mathbf{1}(U_{j}\leq t)
\end{equation}
is the transformed empirical distribution function, and which does not depend on $\theta$ and allows to rewrite (\ref{varpi}) into the distribution-free form via a change of the variables, namely
\begin{equation}\label{varpia}
  \varpi^{p}_{n}=\bigg(\int^{1}_{0}|G_{n}(t)|^{p}\dd t\bigg)^{1/p}.
\end{equation}

Notably, for $p=1$, the statistic $\omega^{1}_{n}=\varpi^{1}_{n}$ coincides with the Wasserstein 1-distance between the empirical distribution given by (\ref{TECDF}) and the standard uniform distribution. In this case, it is possible to carry out the integral inside (\ref{varpia}) to obtain a finite-dimensional computational form, whose structural complexity is comparable to what is known for the CvM (or $\omega^{2}_{n}$) case, with $p=2$; see, e.g., \cite[\text{Eq.} (1.1)]{CF}. For this purpose, let $U_{(j)}$ denote the $j$th order statistic of $U=\{U_{1},\dots,U_{n}\}$. We then have the following theorem.

\begin{theorem}\label{thm:1}
Under $\mathrm{H}_{0}$, $\omega^{1}_{n}$ admits the computational form
\begin{equation}\label{CF1}
  \omega^{1}_{n}=\varpi^{1}_{n}=\sqrt{n}\sum^{n}_{j=1}
  \left(\begin{cases}
    \displaystyle \frac{j^{2}-nU_{(j)}}{n^{2}},&\quad \displaystyle U_{(j)}\leq\frac{j-1}{n} \\
    \displaystyle \bigg(U_{(j)}-\frac{2j-1}{2n}\bigg)^{2}+\frac{4j(j-1)+3}{4n^{2}},&\quad \displaystyle\frac{j-1}{n}<U_{(j)}\leq\frac{j}{n} \\
    \displaystyle \frac{nU_{(j)}+(j-1)^{2}}{n^{2}},&\quad \displaystyle U_{(j)}>\frac{j}{n}
  \end{cases}\right)
  -\frac{2n^{2}+1}{6\sqrt{n}}.
\end{equation}
\end{theorem}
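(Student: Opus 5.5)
The plan is to integrate $|G_{n}|$ exactly, one gap between consecutive order statistics at a time, and then regroup the resulting sum by order statistic. By (\ref{varpia}) with $p=1$, $\omega^{1}_{n}=\sqrt{n}\int_{0}^{1}|\breve{F}_{n}(t)-t|\,\dd t$. Since $F_{\theta}$ is absolutely continuous, the $U_{j}$ are a.s.\ distinct. Set $U_{(0)}:=0$ and $U_{(n+1)}:=1$. On $[U_{(k)},U_{(k+1)})$, $k=0,\dots,n$, the transformed empirical distribution function is constant, $\breve{F}_{n}\equiv k/n$. Hence
\begin{equation*}
  \int_{0}^{1}|\breve{F}_{n}(t)-t|\,\dd t=\sum_{k=0}^{n}\big(\varphi_{k}(U_{(k+1)})-\varphi_{k}(U_{(k)})\big),\qquad \varphi_{k}(u):=\int_{0}^{u}\Big|\frac{k}{n}-t\Big|\,\dd t .
\end{equation*}
The antiderivative is explicit:
\begin{equation*}
  \varphi_{k}(u)=\frac{k}{n}u-\frac{u^{2}}{2}\ \text{ for } u\le \frac{k}{n},\qquad \varphi_{k}(u)=\frac{k^{2}}{2n^{2}}+\frac12\Big(u-\frac{k}{n}\Big)^{2}\ \text{ for } u>\frac{k}{n}.
\end{equation*}

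Next I will reorganize the telescoping-type sum by order statistic instead of by gap. Each interior $U_{(j)}$, $1\le j\le n$, appears twice: as the right endpoint of gap $j-1$ and as the left endpoint of gap $j$. It therefore contributes $\psi_{j}(U_{(j)}):=\varphi_{j-1}(U_{(j)})-\varphi_{j}(U_{(j)})$. The endpoints $U_{(0)}=0$ and $U_{(n+1)}=1$ contribute $\varphi_{n}(1)-\varphi_{0}(0)=1/2$. The function $\psi_{j}$ involves the two breakpoints $(j-1)/n$ and $j/n$, which produces exactly the three cases in (\ref{CF1}):
\begin{itemize}
  \item if $u\le (j-1)/n$, both $\varphi$'s are in their first branch and $\psi_{j}(u)=-u/n$;
  \item if $u>j/n$, both are in their second branch and $\psi_{j}(u)$ is linear in $u$, namely $u/n-(2j-1)/(2n^{2})$ after simplification;
  \item if $(j-1)/n<u\le j/n$, the branches are mixed and $\psi_{j}$ becomes a quadratic, which after completing the square centres at $(2j-1)/(2n)$.
\end{itemize}

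Finally, I will match the stated normalization. Comparing the first case with (\ref{CF1}), the paper's bracket equals $\psi_{j}(U_{(j)})+j^{2}/n^{2}$. I therefore add and subtract $\sum_{j=1}^{n}j^{2}/n^{2}=(n+1)(2n+1)/(6n)$. This leaves the constant
\begin{equation*}
  \frac12-\frac{(n+1)(2n+1)}{6n}=-\frac{2n^{2}+1}{6n},
\end{equation*}
which, after multiplying by $\sqrt{n}$, gives $-(2n^{2}+1)/(6\sqrt{n})$ as claimed.

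What remains is to check that $\psi_{j}+j^{2}/n^{2}$ reproduces the other two branches of (\ref{CF1}) term by term. This branch bookkeeping, in particular the middle quadratic case with its constant $(4j(j-1)+3)/(4n^{2})$, is where I expect the only real obstacle: sign and constant errors are easy to make. I will verify it by checking continuity of the three pieces at $u=(j-1)/n$ and $u=j/n$, which must hold because $\psi_{j}$ is continuous.
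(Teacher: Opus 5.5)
Your argument is correct, and it takes a different route from the paper's. The paper moves to the quantile side using the one-dimensional Wasserstein identity $\int_0^1|\breve F_n(u)-u|\,\dd u=\int_0^1|u-\breve F_n^{-1}(u)|\,\dd u$. Since $\breve F_n^{-1}\equiv U_{(j)}$ on $((j-1)/n,j/n]$, each order statistic contributes the single integral $\int_{(j-1)/n}^{j/n}|u-U_{(j)}|\,\dd u$, and the three cases appear at once with no regrouping. You instead stay on the $t$-axis, integrate over the gaps between order statistics, and regroup by summation by parts, which in effect re-proves that identity for this step function. The two decompositions agree term by term: $\psi_j(u)=\int_{(j-1)/n}^{j/n}|s-u|\,\dd s-\frac{2j-1}{2n^2}$, and these shifts sum to $\frac12$, cancelling your boundary contribution $\varphi_n(1)-\varphi_0(0)$. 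Your version is fully self-contained. It also shows that a direct computation from (\ref{G})--(\ref{TECDF}) need not produce the double sum of Appendix \ref{A}, because $\breve F_n$ is constant on each gap. The paper's version is shorter, and it is the one that extends to a general $F_\theta$ in Appendix \ref{B}.

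There is one arithmetic slip to fix. For $u>j/n$, the constant $\frac{(j-1)^2-j^2}{2n^2}=-\frac{2j-1}{2n^2}$ adds to the $-\frac{2j-1}{2n^2}$ coming from the difference of squares. So $\psi_j(u)=\frac{u}{n}-\frac{2j-1}{n^2}$, not $\frac{u}{n}-\frac{2j-1}{2n^2}$. Only the corrected value gives $\psi_j+\frac{j^2}{n^2}=\frac{nu+(j-1)^2}{n^2}$, matching the third branch of (\ref{CF1}). Your planned continuity check would have caught this: the middle branch $\psi_j(u)=\big(u-\frac{2j-1}{2n}\big)^2+\frac{3-4j}{4n^2}$ gives $\psi_j(j/n)=-\frac{j-1}{n^2}$, whereas your value gives $\frac{1}{2n^2}$. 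Adding $\frac{j^2}{n^2}$ to the middle branch yields exactly $\frac{4j(j-1)+3}{4n^2}$, and your constant $\frac12-\frac{(n+1)(2n+1)}{6n}=-\frac{2n^2+1}{6n}$ is correct.
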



\begin{proof}
Define the empirical quantile function as the right-inverse of the (transformed) empirical distribution function in (\ref{TECDF}),
\begin{equation*}
  \breve{F}^{-1}_{n}(q):=\inf\{t\in[0,1]:\;\breve{F}_{n}(t)\geq q\},\quad q\in[0,1].
\end{equation*}
It is clear that for every $1\leq j\leq n$, $\breve{F}^{-1}_{n}(q)=U_{(j)}$ for $q\in((j-1)/n,j/n]$.

On the other hand, if $p=1$, by a well-known property of the Wasserstein 1-distance (see, e.g., \cite[\text{Eq.} (1.2)]{DBGM}), we have
\begin{equation*}
  \omega^{1}_{n}=\varpi^{1}_{n}=\sqrt{n}\int^{1}_{0}|\breve{F}_{n}(u)-u|\dd u=\sqrt{n}\int^{1}_{0}|u-\breve{F}^{-1}_{n}(u)|\dd u.
\end{equation*}
This then allows to rewrite (\ref{varpia}) as a sum of integrals over $n$ disjoint intervals, i.e.,
\begin{equation*}
  \varpi^{1}_{n}=\sqrt{n}\int^{1}_{0}|u-\breve{F}^{-1}_{n}(u)|\dd u=\sqrt{n}\sum^{n}_{j=1}\int^{j/n}_{(j-1)/n}|u-U_{(j)}|\dd u.
\end{equation*}
After evaluating these integrals, we obtain
\begin{equation}\label{CF1c}
  \varpi^{1}_{n}=\sqrt{n}\sum^{n}_{j=1}
  \begin{cases}
    \displaystyle\frac{(j-nU_{(j)})^{2}-(j-1-nU_{(j)})^{2}}{2n^{2}},&\quad \displaystyle U_{(j)}\leq\frac{j-1}{n} \\
    \displaystyle\frac{(j-nU_{(j)})^{2}+(nU_{(j)}-j+1)^{2}}{2n^{2}},&\quad \displaystyle \frac{j-1}{n}<U_{(j)}\leq\frac{j}{n} \\
    \displaystyle\frac{(nU_{(j)}-j+1)^{2}-(nU_{(j)}-j)^{2}}{2n^{2}},&\quad \displaystyle U_{(j)}>\frac{j-1}{n}.
  \end{cases}
\end{equation}
which is equivalent to (\ref{CF1}) upon taking out the common factors independent of $U_{(j)}$'s and summing them, namely $\sum^{n}_{j=1}(j^{2}-j+1/2)/n^{2}=(2n^{2}+1)/(6n)$.
\end{proof}


About the proof of Theorem \ref{thm:1}, it is possible to compute the integral in (\ref{varpia}) using (\ref{G}) and (\ref{TECDF}) directly, instead of invoking the functional inversion, which will then lead to an alternative form involving two summations of $n$ terms, hence less efficient, which can be found in Appendix \ref{A}. Theorem \ref{thm:1} also justifies the interpretation of $\omega^{1}_{n}:S_{n}\mapsto\mathbb{R}$ as a function of the order statistics vector $U_{(\cdot)}:=[U_{(1)},U_{(2)},\dots,U_{(n)}]^{\intercal}$, where $S_{n}:=\{y\in[0,1]^{n}:y_{1}\leq y_{2}\leq\cdots\leq y_{n}\}$ is the $n$-dimensional unit simplex.

Moreover, tight bounds are readily available by taking advantage of the linearity of the integrand in (\ref{varpia}) with $p=1$. Indeed, for $n=1$, one has the elementary problem to find a point on the hypotenuse of an isosceles right triangle such that the two sub-triangles obtained by respectively drawing (two) perpendicular line segments from this point towards the two legs has minimal or maximal total area. As a consequence, $\varpi^{1}_{1}$ attains its maximum of $1/2$ when $U_{1}\in\{0,1\}$ and its minimum of $1/4$ when $U_{1}=1/2$, and an induction argument from here implies that
\begin{equation*}
  \frac{1}{4\sqrt{n}}\leq\varpi^{1}_{n}\leq\frac{\sqrt{n}}{2},\quad n\geq1,
\end{equation*}
where the upper bound is attained when either $U_{(1)}=1$ or $U_{(n)}=0$ and the lower bound when $U_{(j)}=(2j-1)/(2n)$ for all $1\leq j\leq n$; both bounds are attained with zero probability.

\medskip

\subsection{Limit null distribution}\label{sec:2.2}

As the sample size increases, the convergence of the omega-$p$ statistic follows from classical empirical process theory. Indeed, Donsker's theorem states that under $\mathrm{H}_{0}$, the uniform empirical process $G_{n}$ converges weakly in $\mathrm{L}^{\infty}([0,1])$ to a standard Brownian bridge $B\equiv(B(t))_{t\geq0}$. Since the $\mathrm{L}^{p}$-functional for $p\in[1,\infty]$ is continuous on $\mathrm{L}^{\infty}([0,1])$, it follows that $\varpi^{p}_{n}$ converges in distribution to the $\mathrm{L}^{p}$-norm of $B$, namely,
\begin{equation*}
  \varpi^{p}_{n}\overset{\rm d}{\to}\varpi^{p}_{\infty},
\end{equation*}
with the limit statistic
\begin{equation*}
  \varpi^{p}_{\infty}:=\bigg(\int^{1}_{0}|B(t)|^{p}\dd t\bigg)^{1/p}=\bigg(\int^{1}_{0}|W(t)-tW(1)|^{p}\dd t\bigg)^{1/p},
\end{equation*}
where $W\equiv(W(t))_{t\geq0}$ is the corresponding standard Brownian motion.

Fortunately, in the case $p=1$, the distribution of $\varpi^{1}_{\infty}=\omega^{1}_{\infty}$ still has an explicit form, with a general formula for its CDF available in \cite[\text{Eq.} 1.8.7.(1)]{BS} expressed in terms of the first Airy function $\Ai\equiv\Ai(\cdot)$. This result is parallel to the cases $p=\infty$ and $p=2$, respectively, for which $\varpi^{\infty}_{\infty}$ and $(\varpi^{2}_{\infty})^{2}$ are known to have the Kolmogorov distribution and the so-called Cram\'{e}r--von Mises distribution; see, e.g., \cite[\text{Sect.} 3]{MTW} and \cite[\text{Thm.} 2]{T2}. The CDFs of the latter two limit statistics are provided below for comparison.
\begin{align}\label{LCDF.i2}
  F_{\varpi^{\infty}_{\infty}}(x)&=\frac{\sqrt{2\pi}}{x}\sum^{\infty}_{k=1}e^{-\pi^{2}(2k-1)^{2}/(8x^{2})}, \nonumber\\
  F_{\varpi^{2}_{\infty}}(x)&=\frac{2}{\sqrt{x}}\sum^{\infty}_{k=0}\frac{(-1)^{k}e^{-(k+1/4)^{2}/x^{2}}}{k!\Gf(1/2-k)} \D_{-1/2}\bigg(\frac{4k+1}{2x}\bigg),\quad x>0,
\end{align}
where $\Gf\equiv\Gf(\cdot)$ and $\D\equiv\D(\cdot)$ denote the usual gamma function and the parabolic cylinder function, respectively. In Theorem \ref{thm:2}, we present an alternative explicit formula for the distribution function of $\varpi^{1}_{\infty}$ that is structurally simpler than the one found in \cite[\text{Eq.} 1.8.7.(1)]{BS}. Although this expression could be derived directly from the formula \text{ibid.} using various known functional relations within the Bessel function family, we provide a self-contained proof for completeness. The derivation here builds on a known Laplace transform result and exploits simplifications arising from the more general Meijer G function, which is also used in Section \ref{sec:3}. For the corresponding PDFs of these statistics presented in similar forms, see also \cite[\text{Eq.} (25)]{X2} and \cite[\text{Thm.} 1]{T2}.

\begin{theorem}\label{thm:2}
Let $\{\alpha'_{k}:\;k\in\mathbb{Z}_{++}\}$ be the zeros of the derivative of $\Ai$, arranged such that $\alpha'_{k}>\alpha'_{k+1}$ for all $k$, and let $\K\equiv\K_{\cdot}(\cdot)$ denote the modified Bessel function of the second kind. Then, the distribution function of $\varpi^{1}_{\infty}$ can be written as
\begin{equation}\label{LCDF.1}
  F_{\varpi^{1}_{\infty}}(x)=\frac{1}{\sqrt{6\pi}x}\sum^{\infty}_{k=1}e^{(\alpha'_{k})^{3}/(27x^{2})} \K_{1/3}\bigg(\frac{(-\alpha'_{k})^{3}}{27x^{2}}\bigg),\quad x>0.
\end{equation}
\end{theorem}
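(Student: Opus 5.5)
We would start from the known Laplace transform of the area under the absolute Brownian bridge, i.e.\ of $\varpi^{1}_{\infty}=\int_{0}^{1}|B(t)|\,\dd t$. This transform comes from Kac's formula applied to the additive functional $\int|W|$. For Brownian motion started at $0$, the resolvent of $\frac12\partial_{xx}-\lambda-s|x|$ is built from $\Ai\big((2s)^{1/3}(|x|+\lambda/s)\big)$. Pinning $W(1)=0$ (dividing by the Gaussian density $1/\sqrt{2\pi}$ at $0$), one obtains the double transform
\begin{equation*}
  \int_{0}^{\infty}e^{-\lambda t}\frac{1}{\sqrt{2\pi t}}\,\E\bigl[e^{-s\int_{0}^{t}|B_{t}(u)|\dd u}\bigr]\dd t=-\frac{\Ai\bigl((2s)^{-2/3}2\lambda\bigr)}{(2s)^{1/3}\Ai'\bigl((2s)^{-2/3}2\lambda\bigr)}.
\end{equation*}
Here $B_{t}$ is the Brownian bridge of length $t$. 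Next we invert in $\lambda$ by residues. The poles sit where $\Ai'$ vanishes, i.e.\ at $\lambda_{k}=(2s)^{2/3}\alpha'_{k}/2$. Because $\Ai''(\alpha'_{k})=\alpha'_{k}\Ai(\alpha'_{k})$, each residue simplifies. Brownian scaling $\int_{0}^{t}|B_{t}|\overset{\rm d}{=}t^{3/2}\varpi^{1}_{\infty}$ then lets us set $t=1$. This produces the classical series
\begin{equation*}
  \E\bigl[e^{-s\varpi^{1}_{\infty}}\bigr]=\sqrt{2\pi}\,(2s^{2})^{1/3}\sum_{k\ge1}\frac{e^{2^{-1/3}\alpha'_{k}s^{2/3}}}{-\alpha'_{k}}
\end{equation*}
(up to the exact constant, which we will check against the normalization $\E[e^{-0\cdot\varpi}]=1$ via small-$s$ asymptotics, or against Shepp/Rice's form). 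Equivalently, we may cite this transform from \cite{BS} or the absolute-area literature; only its structure matters below.

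Next we obtain the CDF via $\int_{0}^{\infty}e^{-sx}F(x)\,\dd x=s^{-1}\E[e^{-s\varpi^{1}_{\infty}}]$. We invert termwise, since each term has the form $c_{k}\,s^{-1/3}e^{-a_{k}s^{2/3}}$ with $a_{k}=-2^{-1/3}\alpha'_{k}>0$. The inverse Laplace transform of $s^{-1/3}e^{-a s^{2/3}}$ is a stable-type density that can be written as a Meijer G function. Because the exponent $2/3$ gives a G-function of order reducible through the multiplication formula for $\Gf$, it collapses to an elementary factor times $\K_{1/3}$. Concretely, we aim to verify the identity
\begin{equation*}
  \int_{0}^{\infty}e^{-sx}\,\frac{1}{x}e^{-b/x^{2}}\K_{1/3}\Bigl(\frac{b}{x^{2}}\Bigr)\dd x=C\,s^{-1/3}e^{-c\,b^{1/3}s^{2/3}}
\end{equation*}
with $b=a^{3}\cdot\mathrm{const}=(-\alpha'_{k})^{3}/27$. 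We will check it through Mellin transforms. The Mellin transform of the left side is a product of gamma functions, obtained from $\int_0^\infty y^{\mu-1}e^{-y}\K_{\nu}(y)\dd y$, which is explicit in gammas. The right side's Mellin transform is $\Gf$ of linear arguments, and the two match after the Gauss triplication formula. Summing over $k$ and exchanging sum and inversion gives (\ref{LCDF.1}). The exchange is justified by the rapid growth $-\alpha'_{k}\sim(3\pi k/2)^{2/3}$, which makes $e^{(\alpha'_{k})^{3}/(27x^{2})}\K_{1/3}(\cdot)$ decay like $e^{-2(-\alpha'_{k})^{3}/(27x^{2})}$ and hence the series converge uniformly on compacts of $x>0$.

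The main obstacle will be the Meijer G reduction step, together with consistent tracking of all constants: the powers of $2$, the prefactor $1/\sqrt{6\pi}$, and the $27$. We will first try matching Mellin transforms, since that turns the whole claim into a gamma-function identity via the triplication formula. As a sanity check we will confirm that $F(x)\to1$ as $x\to\infty$, which follows from small-$s$ behavior of the Laplace transform, or alternatively by comparing the mean $\E\varpi^{1}_{\infty}=\sqrt{\pi/32}$.
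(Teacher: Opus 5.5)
Your route is essentially the paper's: a series over the zeros of $\Ai'$ for the Laplace transform, termwise inversion of $s^{-1}\E[e^{-s\varpi^{1}_{\infty}}]$, and reduction of each term to $\K_{1/3}$. Two of your steps differ, and both are legitimate, more self-contained substitutes. You derive the transform from Kac's formula, where the paper cites the Borodin--Salminen joint transform and conditions on $W(1)=0$. You check the inversion by Mellin transforms and Gauss's multiplication formula, where the paper uses Meijer G tables. Your double transform $-\Ai(z_{0})/((2s)^{1/3}\Ai'(z_{0}))$ with $z_{0}=2^{1/3}s^{-2/3}\lambda$ is correct.

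There is, however, a concrete error that makes your central identity false, and it is not a matter of constants. At $\lambda_{k}=2^{-1/3}s^{2/3}\alpha'_{k}$ you have $\Ai'(z_{0})\approx\alpha'_{k}\Ai(\alpha'_{k})\,2^{1/3}s^{-2/3}(\lambda-\lambda_{k})$. So the residue is $s^{1/3}/(2^{2/3}(-\alpha'_{k}))$, and
\[
\E\big[e^{-s\varpi^{1}_{\infty}}\big]=\sqrt{2\pi}\,2^{-2/3}s^{1/3}\sum_{k\geq1}\frac{e^{2^{-1/3}\alpha'_{k}s^{2/3}}}{-\alpha'_{k}},
\]
which is (\ref{LTvarpi1}). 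It is not $\sqrt{2\pi}(2s^{2})^{1/3}\sum_{k}\cdots$. Your proposed normalization check exposes this. With $-\alpha'_{k}\sim(3\pi k/2)^{2/3}$ the sum behaves like $\pi^{-1/2}2^{1/6}s^{-1/3}$ as $s\downarrow0$. Your formula then tends to $0$ like $2s^{1/3}$, and no constant can repair that.

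The wrong power propagates into the identity you plan to verify. Its left-hand side is the right function, but as $x\to\infty$,
\[
\frac{1}{x}e^{-b/x^{2}}\K_{1/3}(b/x^{2})\sim\tfrac12\Gf(\tfrac13)(2/b)^{1/3}x^{-1/3}.
\]
Its Laplace transform is therefore of order $s^{-2/3}$ as $s\downarrow0$, never $Cs^{-1/3}$. Correspondingly, your Mellin check would not close: $\frac32\Gf(1-\frac{3w}{2})/\Gf(1-w)$ does not reduce to the shape $\Gf(\mu+\frac13)\Gf(\mu-\frac13)/\Gf(\mu+\frac12)$ produced by $\int_{0}^{\infty}y^{\mu-1}e^{-y}\K_{1/3}(y)\,\dd y$.

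The correct target is
\[
\int_{0}^{\infty}e^{-sx}\,\frac{1}{x}e^{-2a^{3}/(27x^{2})}\K_{1/3}\Big(\frac{2a^{3}}{27x^{2}}\Big)\dd x=\frac{\sqrt{3}\,\pi}{a}\,s^{-2/3}e^{-as^{2/3}}.
\]
Your Mellin strategy does prove this. The inverse transform of $s^{-2/3}e^{-s^{2/3}}$ has Mellin transform $\frac32\Gf(\frac{1-3w}{2})/\Gf(1-w)$. Triplication and duplication turn this into
\[
\frac{3^{1-3w/2}2^{w}}{4\sqrt{\pi}}\,\frac{\Gf(\frac{1-3w}{6})\Gf(\frac{5-3w}{6})}{\Gf(1-\frac{w}{2})},
\]
which matches the left side with $\mu=\frac{1-w}{2}$ and $b=2/27$, up to the factor $1/(\sqrt{3}\pi)$. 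Now take $a=a_{k}=2^{-1/3}(-\alpha'_{k})$, so that $2a_{k}^{3}=(-\alpha'_{k})^{3}$. The coefficient becomes
\[
\frac{\sqrt{2\pi}}{2^{2/3}(-\alpha'_{k})}\cdot\frac{a_{k}}{\sqrt{3}\pi}=\frac{1}{\sqrt{6\pi}},
\]
which gives (\ref{LCDF.1}).

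Finally, uniform convergence on compacts of $x>0$ does not by itself justify swapping the sum with the Laplace integral over $(0,\infty)$. Use positivity instead. Every term $e^{(\alpha'_{k})^{3}/(27x^{2})}\K_{1/3}(\cdot)$ is positive, so Tonelli gives equality of $\int_{0}^{\infty}e^{-sx}\sum_{k}H_{k}\,\dd x$ with $\sum_{k}\int_{0}^{\infty}e^{-sx}H_{k}\,\dd x=s^{-1}\E[e^{-s\varpi^{1}_{\infty}}]$. Uniqueness of the Laplace transform plus continuity then identifies the series with $F_{\varpi^{1}_{\infty}}$.
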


\begin{proof}
Starting with the Laplace transform of the absolute-value integral $\int^{1}_{0}|W(t)|\dd t$ joint with the terminal value of $W_{1}$ given in \cite[\text{Eq.} 1.8.7.(1)]{BS}, we have
\begin{equation*}
  \E\big[e^{-u\int^{1}_{0}W(t)\dd t};W(1)\in\dd
  z\big]=\sum^{\infty}_{k=1}\frac{u^{1/3}e^{(u^{2}/2)^{1/3}\alpha'_{k}}\Ai(\alpha'_{k}+(2u)^{1/3}|z|)}{2^{2/3}(-\alpha'_{k})\Ai(\alpha'_{k})}\dd
  z,\quad\Re u>0,\;z\in\mathbb{R}.
\end{equation*}
Since $W(1)$ has a standard normal distribution, this implies that
\begin{equation}\label{LTvarpi1}
  \E[e^{-u\varpi^{1}_{\infty}}]=\E\big[e^{-u\int^{1}_{0}|W(t)-tW(1)|\dd t}\big] =\sqrt{2\pi}\sum^{\infty}_{k=1}\frac{u^{1/3}e^{(u^{2}/2)^{1/3}\alpha'_{k}}}{2^{2/3}(-\alpha'_{k})}, \quad\Re u>0.
\end{equation}
Noting that by (\cite[\text{Eqs.} 10.4.95 and 10.4.105]{AS}), as $k\to\infty$,
\begin{equation*}
  \alpha'_{k}=-\bigg(\frac{3\pi(4k-3)}{8}\bigg)^{2/3}(1+O(k^{-2})),
\end{equation*}
the series in (\ref{LTvarpi1}) converges absolutely, and termwise inversion can be performed. The integration property of Laplace transform entails that we invert $\E[e^{-u\varpi^{1}_{\infty}}]/u$, which by the inversion formula gives that
\begin{equation}\label{Fvarpi1}
  F_{\varpi^{1}_{\infty}}(x)
  =\frac{\sqrt{2\pi}}{2^{2/3}}\sum^{\infty}_{k=1}\frac{1}{-\alpha'_{k}}\frac{1}{2\pi\ii}\int^{c+\ii\infty}_{c-\ii\infty} \frac{e^{ux-(-\alpha'_{k}/2^{1/3})u^{2/3}}}{u^{2/3}}\dd u,
\end{equation}
for arbitrary $c>0$, which amounts to inverting $e^{-u^{2/3}}/u^{2/3}$ in $u$. To this end, we may consult \cite[\text{Eq.} 2.2.1.19]{PCGT} to obtain a general expression in terms of the Meijer G function,
\begin{align}\label{ILT.G}
  \frac{1}{2\pi\ii}\int^{c+\ii\infty}_{c-\ii\infty}\frac{e^{ux-u^{2/3}}}{u^{2/3}}\dd u
  &=\sqrt{\frac{3}{\pi}}\frac{1}{x}\bigg(\frac{x}{2}\bigg)^{2/3}\G^{3,0}_{2,3}\bigg(
  \begin{array}{ccccc}
     &  &  & 1/3 & 5/6 \\
    0 & 1/3 & 2/3 &  &
  \end{array}
  \bigg|\frac{4}{27x^{2}}\bigg) \nonumber\\
  &=\sqrt{\frac{3}{\pi}}\frac{1}{x}\bigg(\frac{x}{2}\bigg)^{2/3}\G^{2,0}_{1,2}\bigg(
  \begin{array}{cccc}
     &  &  & 5/6 \\
    0 & 2/3 &  &
  \end{array}
  \bigg|\frac{4}{27x^{2}}\bigg) \nonumber\\
  &=\frac{e^{-2/(27x^{2})}}{\sqrt{3}\pi}\K_{1/3}\bigg(\frac{2}{27x^{2}}\bigg),\quad x>0,
\end{align}
where the second equality uses the basic identity in \cite[\text{Eq.} 5.3.1.(7)]{EMOT} and the third equality uses \cite[\text{Eqs.} 5.6.(5) \& 6.9.1.(14)]{EMOT} for reduction into the Bessel $\K$ function under the particular dimensions. With the scaling property of Laplace transform, the desired formula (\ref{LCDF.1}) follows from plugging (\ref{ILT.G}) into (\ref{Fvarpi1}) and simplifying.
\end{proof}

In light of the absolute convergence of the series in (\ref{LTvarpi1}), by using the differentiation formula in \cite[\text{Eq.} 10.2.22]{AS} for the Bessel $\K$ function, the termwise differentiation of (\ref{LCDF.1}) readily yields a formula\footnote{To our knowledge, this formula (even in modified form) has not appeared in prior work.} for the density function of $\varpi^{1}_{\infty}$,
\begin{equation}\label{LPDF.1}
  f_{\varpi^{1}_{\infty}}(x)=\frac{1}{27\sqrt{6\pi}x^{4}}\sum^{\infty}_{k=1}e^{(\alpha'_{k})^{3}/(27x^{2})} \bigg((2(-\alpha'_{k})^{3}-9x^{2})\K_{1/3}\bigg(\frac{(-\alpha'_{k})^{3}}{27x^{2}}\bigg) +2(-\alpha'_{k})^{3}\K_{2/3}\bigg(\frac{(-\alpha'_{k})^{3}}{27x^{2}}\bigg)\bigg),
\end{equation}
for $x>0$, which is clearly infinitely smooth as so is the Bessel $\K$ function.

Both formulae (\ref{LCDF.1}) and (\ref{LPDF.1}) are amenable to numerical implementation owing to the rapid convergence of the series involving terms decaying at a quadratic exponential rate for small arguments, which we explain in detail in Corollary \ref{cor:1}.

\begin{corollary}\label{cor:1}
Based on (\ref{LCDF.1}) and (\ref{LPDF.1}), let $F_{\varpi^{1}_{\infty}}(x)=\sum^{\infty}_{k=1}H_{k}(x)$ and $f_{\varpi^{1}_{\infty}}(x)=\sum^{\infty}_{k=1}h_{k}(x)$. Then, for any $x>0$, there exists a positive integer $K\equiv K(x)$ such that
\begin{align*}
  \bigg|\sum^{\infty}_{k=K+1}H_{k}(x)\bigg|&<\frac{|H_{K}(x)H_{K+1}(x)|}{|H_{K}(x)|-|H_{K+1}(x)|}, \\
  \bigg|\sum^{\infty}_{k=K+1}h_{k}(x)\bigg|&<\frac{|h_{K}(x)h_{K+1}(x)|}{|h_{K}(x)|-|h_{K+1}(x)|}.
\end{align*}
\end{corollary}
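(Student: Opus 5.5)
The idea is to compare the tail of each series with a geometric series. For fixed $x>0$ I would show that the ratio $r_{k}(x):=|H_{k+1}(x)|/|H_{k}(x)|$ eventually decreases in $k$ and tends to $0$; the same is to be shown for $h_{k}$. Granting this, pick $K$ with $r_{k}\leq r_{K}<1$ for all $k\geq K$. Then $|H_{K+j}|\leq r_{K}^{j}|H_{K}|$, and with strict inequality for $j\geq2$ whenever the ratios strictly decrease. Consequently
\begin{equation*}
\bigg|\sum_{k=K+1}^{\infty}H_{k}\bigg|\leq\sum_{j\geq1}|H_{K+j}|<|H_{K}|\frac{r_{K}}{1-r_{K}}=\frac{|H_{K}H_{K+1}|}{|H_{K}|-|H_{K+1}|},
\end{equation*}
which is exactly the stated bound. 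So the whole argument reduces to proving that the ratios decrease to zero beyond some index.

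To get the ratio behaviour, I would set $a_{k}:=(-\alpha'_{k})^{3}/(27x^{2})>0$. By the asymptotics for $\alpha'_{k}$ used in the proof of Theorem \ref{thm:2}, $a_{k}\sim c\,k^{2}/x^{2}$ with $c=(3\pi/2)^{2}/27\cdot\ldots>0$, and $a_{k}$ is strictly increasing. The $k$th CDF term is then
\begin{equation*}
H_{k}(x)=\frac{1}{\sqrt{6\pi}\,x}\,e^{-a_{k}}K_{1/3}(a_{k}).
\end{equation*}
Since $K_{1/3}>0$, every term is positive. Writing $\phi(a):=e^{-a}K_{1/3}(a)$, the standard asymptotic $K_{\nu}(a)=\sqrt{\pi/(2a)}\,e^{-a}(1+O(1/a))$ gives $\phi(a)\sim\sqrt{\pi/(2a)}\,e^{-2a}$. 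Hence
\begin{equation*}
r_{k}=\frac{\phi(a_{k+1})}{\phi(a_{k})}\sim\sqrt{a_{k}/a_{k+1}}\;e^{-2(a_{k+1}-a_{k})},
\end{equation*}
and because $a_{k+1}-a_{k}\asymp k/x^{2}\to\infty$, this tends to $0$. To obtain genuine eventual monotonicity rather than only a limit, I would use that $\log\phi$ is convex and decreasing. This follows from log-convexity of $K_{\nu}$ (an integral representation $K_{\nu}(a)=\int_{0}^{\infty}e^{-a\cosh t}\cosh(\nu t)\,dt$ gives it via Hölder). Combined with the increasing gaps $a_{k+1}-a_{k}$, which I would take from the asymptotic expansion for large $k$, this makes $\log r_{k}$ eventually decreasing. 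Positivity also means the absolute values are harmless for $H_{k}$.

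The main obstacle is the density terms $h_{k}$. They contain the factor $(2(-\alpha'_{k})^{3}-9x^{2})K_{1/3}+2(-\alpha'_{k})^{3}K_{2/3}$, which is a positive combination once $2(-\alpha'_{k})^{3}>9x^{2}$, i.e., for all $k$ beyond an $x$-dependent index. I would first take $K$ past that index. I would then write $h_{k}=C x^{-4}e^{-a_{k}}\bigl(\ldots\bigr)$ and again use $K_{1/3}\sim K_{2/3}\sim\sqrt{\pi/(2a)}e^{-a}$. This gives $h_{k}\asymp a_{k}^{1/2}e^{-2a_{k}}$ up to $x$-dependent constants, so the same super-geometric decay of ratios follows. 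If exact monotonicity of $h_{k+1}/h_{k}$ is hard to verify, the fallback is cheaper: it suffices to choose $K$ large enough that $\sup_{k\geq K}r_{k}\leq r_{K}$, which only requires $r_{k}\to0$ together with $r_{K}$ being attained as the maximum of the tail ratios. Such a $K$ exists because ratios tending to zero have infinitely many "record-from-the-right" indices. This choice of $K$ is all the corollary asks for, since it only asserts existence of some $K(x)$.
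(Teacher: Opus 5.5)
Your proof is correct and has the same outline as the paper's: Bessel-$\mathrm{K}$ asymptotics and $(-\alpha'_k)^3\asymp k^2$ give super-geometric decay, followed by a geometric-series comparison. Your choice of $K$, however, differs from the paper's, and it is the step that makes the argument work.

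The paper only asserts $|H_{k+1}(x)|/|H_k(x)|<1$ for all $k\geq K$ and then writes down the bound. That is not enough. The right-hand side equals $|H_{K+1}|\sum_{j\geq0}r_K^j$, so every later ratio must be at most $r_K$. For example, terms $1,10^{-2},5\cdot10^{-3},2.5\cdot10^{-3},\dots$ have all ratios below $1$ but violate the bound at the first index.

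Your fallback supplies exactly what is missing. You use $r_k\to0$ and positivity to take $K$ beyond the point where $r_k<1$ (and, for $h_k$, where the terms are positive), at an index with $r_K=\max_{k\geq K}r_k$. Strictness then follows because $r_k\to0<r_K$ forces some later ratio strictly below $r_K$. You should state this, since you only noted strictness for strictly decreasing ratios.

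One correction: drop the log-convexity justification of eventual monotonicity, because it points the wrong way. For equally spaced arguments, convexity of $\log\phi$ makes the ratios non-decreasing. Eventual decrease comes instead from the growth of the gaps $a_{k+1}-a_k$ combined with $|(\log\phi)'(a)|-2=O(1/a)$. Since the record-index choice needs no monotonicity, your proof is complete without that remark.
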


\begin{proof}
For the Bessel $\K$ function, we have the asymptotic (\cite[\text{Eq.} 9.7.2]{AS})
\begin{equation}\label{Kas}
  \K_{1/3}(z)=\sqrt{\frac{\pi}{2}}e^{-z}(z^{-1/2}+O(z^{-3/2})),\quad\text{as }z\to\infty.
\end{equation}
This along with (\ref{LCDF.1}) implies that for fixed $x>0$,
\begin{equation*}
  H_{k}(x):=e^{(\alpha'_{k})^{3}/(27x^{2})}\K_{1/3}\bigg(\frac{(-\alpha'_{k})^{3}}{27x^{2}}\bigg)=\sqrt{\frac{\pi}{2}}e^{2(\alpha'_{k})^{3}/(27x^{2})} \bigg(\sqrt{\frac{27x^{2}}{(-\alpha'_{k})^{3}}}+O(k^{-1})\bigg),\quad\text{as }k\to\infty,
\end{equation*}
which shows that $H_{k}(x)=o(e^{-k})$ as $k\to\infty$, and there exists a constant $K\in\mathbb{Z}_{++}$ such that $|H_{k+1}(x)|/|H_{k}(x)|<1$ for all $k\geq K$. Thus, we have
\begin{equation*}
  \bigg|\sum^{\infty}_{k=K+1}H_{k}(x)\bigg|\leq\sum^{\infty}_{k=K+1}|H_{k}(x)|<\frac{|H_{K}(x)H_{K+1}(x)|}{|H_{K}(x)|-|H_{K+1}(x)|}.
\end{equation*}
The same arguments apply to the function $h_{k}(x)$, $x>0$.
\end{proof}

Based on Corollary \ref{cor:1}, truncating the infinite series in (\ref{LCDF.1}) at $k=3$ guarantees an absolute error less than $10^{-6}$ for all $x\in[0,1]$, i.e., $|F_{\varpi^{1}_{\infty}}(x)-\sum^{3}_{k=1}H_{k}(x)|\leq10^{-6}$, which lays the foundation for efficient computation. Additionally, the unit interval is sufficient for possible tabulation because $\PP\{\varpi^{1}_{\infty}>1\}\leq10^{-3}$, while for arbitrarily large $x$, the following asymptotic formula due to \cite{T1} may be used:
\begin{equation*}
  \PP\{\varpi^{1}_{\infty}>x\}=\frac{1}{\sqrt{6\pi}x}e^{-6x^{2}}+O(x^{-2}),\quad\text{as }x\to\infty.
\end{equation*}

In Figure \ref{fig:1}, we compare the limit distributions of the KS statistic, the (square root) CvM statistic, and the omega-1 statistic in consideration, corresponding to $\varpi^{p}_{\infty}$ with $p=\infty$, $p=2$, and $p=1$, respectively, by implementing (\ref{LCDF.i2}) and (\ref{LCDF.1}) for $x\in[0,2]$, all with the infinite series truncated at $k=4$ (with absolute error less than $10^{-5}$). It is clear that $\varpi^{\infty}_{\infty}$ dominates $\varpi^{2}_{\infty}$, which further dominates $\varpi^{1}_{\infty}$ in stochastic order; comparably, the $\mathrm{L}^{1}$-functional aggregates moderate deviations, whereas the $\mathrm{L}^{2}$-functional tends to amplify extrema, causing crossings of their limit distributions.

\begin{figure}[H]
  \centering
  \includegraphics[scale=0.5]{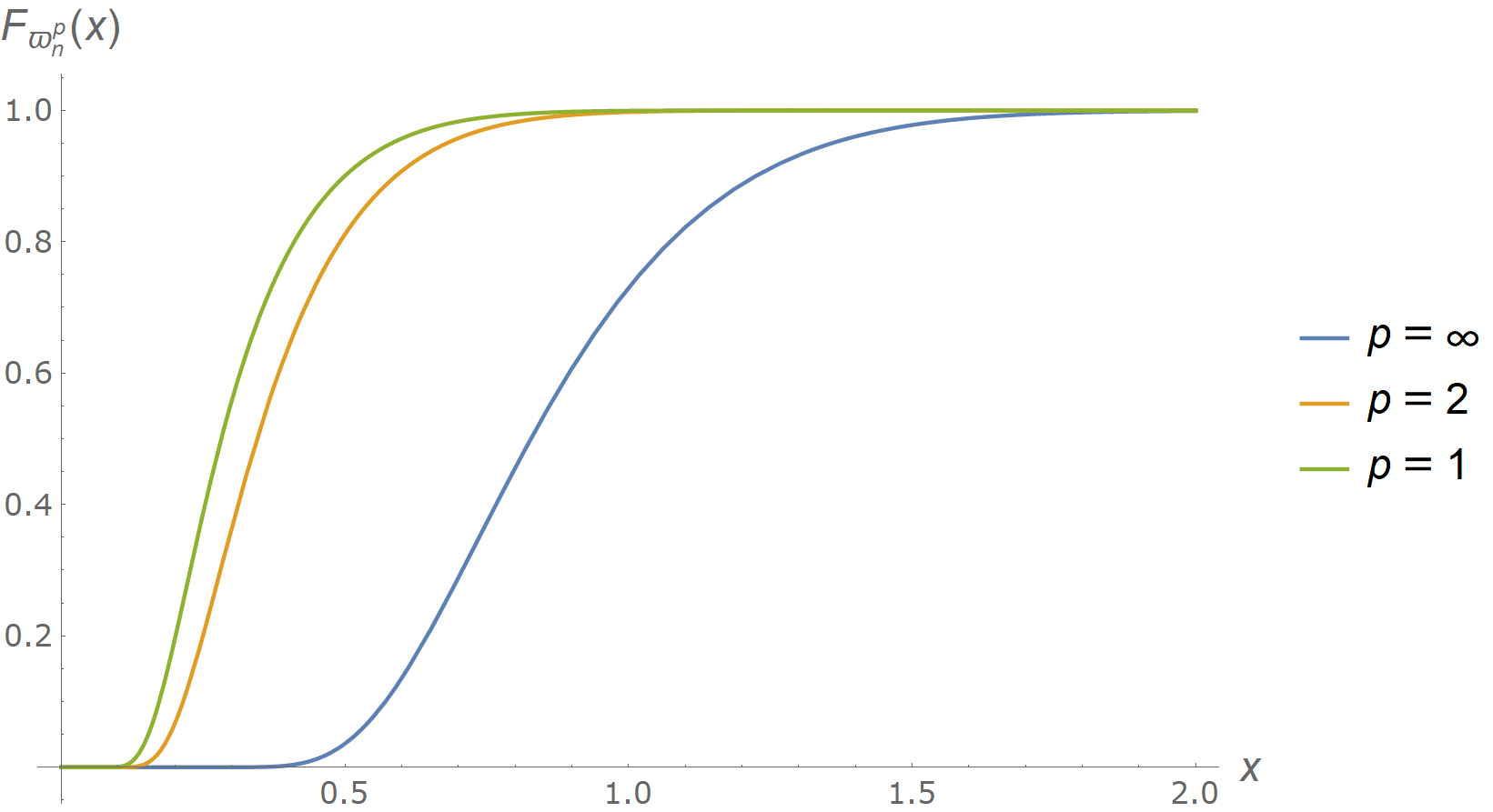}
  \caption{Known limit distributions of $\varpi^{p}_{n}$}
  \label{fig:1}
\end{figure}

Regarding finite-sample convergence, we can obtain a crude rate for the distribution function by combining $\mathrm{L}^{1}$-convergence of the uniform empirical process in (\ref{G}) with the boundedness of the limiting density (\ref{LPDF.1}). Instead of seeking sharper bounds via functional delta methods or higher-order expansions, we shall turn to numerical illustrations in Subsection \ref{sec:2.3}. The results indicate that convergence to the limit distribution is remarkably rapid in practice, which can be largely ascribed to the presence of a seemingly small constant, along with the smoothing effect of the integral in (\ref{varpia}). In particular, the asymptotic approximation appears to be already highly accurate for a sample size as small as $n\geq10$, and consequently, unlike the case of the (KS) $\varpi^{\infty}_{n}$ statistic, where finite-sample corrections are often needed to compensate for slower convergence (see, e.g., \cite{S3}, \cite{MTW}), such refinements do not seem to be critical for $\varpi^{1}_{n}$. The crude rate is presented in Corollary \ref{cor:2}.\footnote{The $\mathrm{L}^{1}$-functional here is smoother than the $\mathrm{L}^{\infty}$-functional underlying the KS statistic, which contributes to improved finite-sample behavior, while it remains comparable in structure to the $\mathrm{L}^{2}$-functional defining the omega-1 statistic.}

\begin{corollary}\label{cor:2}
There exists a constant $C>0$ such that\footnote{It is known that the Esseen constant $C_{0}\leq0.4748$ (see, e.g., \cite{S3}), while numerical estimates based on (\ref{LPDF.1}) show that $C_{1}\leq4$, and so by (\ref{lb}), the constant $C$ in (\ref{CDF1.conv}) is at most $2\sqrt{2C_{0}C_{1}}\approx3.8979$.}
\begin{equation}\label{CDF1.conv}
  \sup_{x>0}|F_{\varpi^{1}_{n}}(x)-F_{\varpi^{1}_{\infty}}(x)|\leq Cn^{-1/4}.
\end{equation}
\end{corollary}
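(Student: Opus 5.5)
Following the remark preceding the statement, the plan is a smoothing argument: combine an $\mathrm{L}^{1}$ coupling of the uniform empirical process $G_{n}$ in (\ref{G}) with its Brownian bridge limit, and the boundedness of the limiting density (\ref{LPDF.1}). Set $C_{1}:=\sup_{x>0}f_{\varpi^{1}_{\infty}}(x)$, which is finite: (\ref{LPDF.1}) is continuous on $(0,\infty)$, decays like $e^{-6x^{2}}$ as $x\to\infty$ by the tail asymptotic of \cite{T1}, and vanishes as $x\to0^{+}$ since every term carries $e^{2(\alpha'_{k})^{3}/(27x^{2})}$.

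\textbf{Step 1 (coupling).} Put $G_{n}$ and a Brownian bridge $B$ on one probability space with
\begin{equation*}
  \E\int^{1}_{0}|G_{n}(t)-B(t)|\dd t\leq\frac{C_{0}}{\sqrt{n}},
\end{equation*}
where $C_{0}$ is an absolute constant. The natural route is the quantile representation used in the proof of Theorem \ref{thm:1}, which already shows $\omega^{1}_{n}$ is a Wasserstein-1 type quantity. Couple each uniform order statistic with the matching Gaussian quantity, or take the Brownian bridge built from the uniform quantile process, so that the $\mathrm{L}^{1}$ error is controlled by an Esseen-type constant $C_{0}$ as in the footnote. A KMT strong approximation would give a rate with a log factor, and any rate $\E\int|G_{n}-B|\leq C_{0}n^{-1/2}$ suffices. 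Because $|\,\|a\|_{1}-\|b\|_{1}|\leq\|a-b\|_{1}$, this yields $\E|\varpi^{1}_{n}-\varpi^{1}_{\infty}|\leq C_{0}n^{-1/2}$ for the coupled versions.

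\textbf{Step 2 (smoothing).} Fix $\delta>0$ and $x>0$. Then
\begin{equation*}
  \PP\{\varpi^{1}_{n}\leq x\}\leq\PP\{\varpi^{1}_{\infty}\leq x+\delta\}+\PP\{|\varpi^{1}_{n}-\varpi^{1}_{\infty}|>\delta\}\leq F_{\varpi^{1}_{\infty}}(x)+C_{1}\delta+\frac{C_{0}}{\delta\sqrt{n}},
\end{equation*}
using the mean value theorem for the first term and Markov's inequality for the second. The symmetric argument gives the matching lower bound. Hence
\begin{equation}\label{lb}
  \sup_{x>0}|F_{\varpi^{1}_{n}}(x)-F_{\varpi^{1}_{\infty}}(x)|\leq C_{1}\delta+\frac{C_{0}}{\delta\sqrt{n}}.
\end{equation}

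\textbf{Step 3 (optimize).} Choose $\delta=\sqrt{C_{0}/C_{1}}\,n^{-1/4}$, which gives the bound $2\sqrt{C_{0}C_{1}}\,n^{-1/4}$. This is (\ref{CDF1.conv}) with $C=2\sqrt{C_{0}C_{1}}$; the footnote's $2\sqrt{2C_{0}C_{1}}$ would arise if the coupling bound carries an extra factor $2$.

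The main obstacle is Step 1. One needs an $\mathrm{L}^{1}$ coupling at the exact rate $n^{-1/2}$ with an explicit constant, rather than a rate like $\log n/\sqrt{n}$. If no clean constant is available, I would settle for KMT, which gives $C n^{-1/4}(\log n)^{1/2}$, and remove the logarithm by a direct bound on the quantile coupling $\E|U_{(j)}-\text{(Gaussian counterpart)}|$, summed over $j$.
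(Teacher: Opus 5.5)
Your Steps 2 and 3 are correct. Your one-sided smoothing is in fact slightly sharper than the paper's. The paper bounds the two-sided window $\PP\{x-\epsilon\le\varpi^{1}_{\infty}\le x+\epsilon\}$ by $2\epsilon C_{1}$, and that, not a factor $2$ in the coupling, is the source of its constant $2\sqrt{2C_{0}C_{1}}$.

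The genuine gap is Step 1, which is asserted rather than proved. You need a single probability space carrying the whole \emph{processes} $G_{n}$ and $B$, with $\E\int^{1}_{0}|G_{n}(t)-B(t)|\,\dd t\le C_{0}n^{-1/2}$. Coupling each $U_{(j)}$, or each $G_{n}(t)$, with a Gaussian counterpart through quantiles controls only one-dimensional marginals, and these couplings cannot be realized simultaneously. Already for $n=1$, co-monotonicity of $(G_{1}(t),B(t))$ forces $\{U_{1}\le t\}=\{B(t)>\sqrt{t(1-t)}\,\Phi^{-1}(1-t)\}$ a.s., with $\Phi$ the standard normal distribution function. The left-hand events increase in $t$. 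But for $0<s<t<1$ the vector $(B(s),B(t))$ is a nondegenerate Gaussian, so $\PP\{B(s)>\sqrt{s(1-s)}\,\Phi^{-1}(1-s),\,B(t)\le\sqrt{t(1-t)}\,\Phi^{-1}(1-t)\}>0$ and the right-hand events are not nested.

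Summing marginal errors over $j$, or integrating them over $t$, therefore controls only $\int^{1}_{0}\mathcal{W}_{1}(\text{law of }G_{n}(t),\text{law of }B(t))\,\dd t$. That quantity is a \emph{lower} bound for $\E\int^{1}_{0}|G_{n}-B|\,\dd t$ under every process coupling, not an upper bound. In addition, an Esseen constant is a Kolmogorov-distance bound and does not by itself control such $\mathrm{L}^{1}$ distances.

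KMT does give a legitimate process coupling, but only with $\E\sup_{t}|G_{n}(t)-B_{n}(t)|=O(\log n/\sqrt{n})$. So as written your argument proves $\sup_{x>0}|F_{\varpi^{1}_{n}}(x)-F_{\varpi^{1}_{\infty}}(x)|\le Cn^{-1/4}\sqrt{\log n}$, not the stated rate. Your proposed removal of the logarithm via bounds on $\E|U_{(j)}-(\text{Gaussian counterpart})|$ is the same marginal-wise device and does not work.

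For comparison, the paper uses exactly your skeleton. It fills Step 1 by applying Berry--Esseen to the standardized $G_{n}(t)/\sqrt{t(1-t)}$, whose bound $C_{0}/\sqrt{nt(1-t)}$ is cancelled by the weight $\sqrt{t(1-t)}$ when integrating in $t$. It combines this with a co-monotone coupling of $G_{n}(t)$ and $B(t)$ chosen separately for each $t$. This is precisely the marginal-wise construction just described. It therefore yields only the integrated marginal quantity, not the single coupling that $\E|\varpi^{1}_{n}-\varpi^{1}_{\infty}|$ and $\PP\{|\varpi^{1}_{n}-\varpi^{1}_{\infty}|>\epsilon\}$ presuppose. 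Its passage from $\int_{\mathbb{R}}|\cdot|\,\dd x$ to $\sup_{x}|\cdot|$ would also need a Wasserstein-1 form of Berry--Esseen, e.g.\ a bound by the standardized third absolute moment divided by $\sqrt{n}$.

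So you have located the real difficulty correctly, but the paper's per-$t$ device does not resolve it either. The missing ingredient is a genuine joint coupling of $G_{n}$ with a Brownian bridge whose $\mathrm{L}^{1}([0,1])$ error is $O(n^{-1/2})$ without a logarithmic loss. Alternatively, you would need a non-coupling argument that reaches the Kolmogorov rate $n^{-1/4}$ directly.
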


\begin{proof}
Note that by (\ref{G}), $\Var G_{n}(t)=t(1-t)$ for any $t\in[0,1]$ fixed. Let us define the standardized empirical process and Brownian bridge
\begin{equation*}
  \widetilde{G}_{n}(t):=\frac{G_{n}(t)}{\sqrt{t(1-t)}},\quad\widetilde{B}(t):=\frac{B(t)}{\sqrt{t(1-t)}},
\end{equation*}
with $\widetilde{G}_{n}(0)=\widetilde{B}(0)=0$. Then, based on (\ref{TECDF}), the Berry--Esseen theorem implies that
\begin{equation*}
  \sup_{x>0}|\PP\{\widetilde{G}_{n}(t)\leq x\}-\PP\{\widetilde{B}(t)\leq x\}|\leq\frac{C_{0}\E|\mathbf{1}(U_{1}\leq t)-t|^{3}}{\sqrt{n(t(1-t))^{3}}}\leq\frac{C_{0}}{\sqrt{nt(1-t)}},
\end{equation*}
where $C_{0}>0$ is the Esseen constant. Thus, by (\ref{varpia}),
\begin{align*}
  \E|\varpi^{1}_{n}-\varpi^{1}_{\infty}|
  &\leq\int^{1}_{0}\E|G_{n}(t)-B(t)|\dd t \\
  &=\int^{1}_{0}\sqrt{t(1-t)}\E|\widetilde{G}_{n}(t)-\widetilde{B}(t)|\dd t \\
  &=\int^{1}_{0}\sqrt{t(1-t)}\int_{\mathbb{R}}|\PP\{\widetilde{G}_{n}(t)\leq x\}-\PP\{\widetilde{B}(t)\leq x\}|\dd x\dd t \\
  &\leq\int^{1}_{0}\sqrt{t(1-t)}\sup_{x\in\mathbb{R}}|\PP\{\widetilde{G}_{n}(t)\leq x\}-\PP\{\widetilde{B}(t)\leq x\}|\dd t \\
  &\leq\frac{C_{0}}{\sqrt{n}},
\end{align*}
where the third line follows from choosing a co-monotonic coupling between $\widetilde{G}_{n}(t)$ and $\widetilde{B}(t)$, for $t\in[0,1]$; see, e.g., \cite[\text{Prop.} 2.1]{C-ART}.

As noted before, the density function $f_{\varpi^{1}_{\infty}}\in\mathcal{C}^{\infty}(\mathbb{R}_{+})$, and based on (\ref{LPDF.1}) and the asymptotic relation (\ref{Kas}), it is clear that $\lim_{x\downarrow0}f_{\varpi^{1}_{\infty}}(x)=0$, and thus $\sup_{x>0}f_{\varpi^{1}_{\infty}}(x)=C_{1}<\infty$. Combining this fact with the last inequality, we have that for any $\epsilon>0$,
\begin{align*}
  \sup_{x>0}|F_{\varpi^{1}_{n}}(x)-F_{\varpi^{1}_{\infty}}(x)|&=\sup_{x>0}|\PP\{\varpi^{1}_{n}\leq x\}-\PP\{\varpi^{1}_{\infty}\leq x\}| \\
  &\leq\sup_{x>0}\PP\{x-\epsilon\leq\varpi^{1}_{\infty}\leq x+\epsilon\}+\PP\{|\varpi^{1}_{n}-\varpi^{1}_{\infty}|>\epsilon\} \\
  &\leq2\epsilon\sup_{x>0}f_{\varpi^{1}_{\infty}}(x)+\frac{\E|\varpi^{1}_{n}-\varpi^{1}_{\infty}|}{\epsilon} \\
  &\leq2\epsilon C_{1}+\frac{C_{0}}{\epsilon\sqrt{n}}.
\end{align*}
By minimizing the last bound over $\epsilon$ we obtain
\begin{equation}\label{lb}
  \sup_{x>0}|F_{\varpi^{1}_{n}}(x)-F_{\varpi^{1}_{\infty}}(x)|\leq2\sqrt{2C_{0}C_{1}}n^{-1/4},
\end{equation}
as desired.
\end{proof}

\medskip

\subsection{Finite-sample computation}\label{sec:2.3}

Analogous to the CvM statistic ($\omega^{2}_{n}=(\varpi^{2}_{n})^{2}$) or the KS statistic ($\varpi^{\infty}_{n}$), a fundamental difficulty in obtaining the exact finite-sample distribution of $\varpi^{1}_{n}$ arises from the dependence structure of the order statistics $U_{(\cdot)}$ that define the empirical distribution function as in (\ref{G}). Generally speaking, since the joint density of the vector $U_{(\cdot)}$ is given by $f_{U_{(\cdot)}}(y)=n!$, for $y\in S_{n}$, computing the exact distribution of any statistic based on the empirical distribution function reduces to integrating the indicator of some convoluted region over $S_{n}$ (\text{cf.} \cite[\text{Sect.} 2]{CF}). In the case of $\varpi^{1}_{n}$, the computational form (\ref{CF1}) allows to explicitly specify such a region, and the resulting distribution function has the general expression
\begin{equation}\label{CDF1}
  F_{\varpi^{1}_{n}}(x):=\PP\{\varpi^{1}_{n}\leq x\}=\E\mathbf{1}(\varpi^{1}_{n}\leq x)=n!\int_{S_{n}}\mathbf{1}(\varpi^{1}_{n}(y)\leq x)\dd y,\quad x\geq0.
\end{equation}

Unfortunately, evaluating the integral in (\ref{CDF1}) explicitly becomes extremely arduous as soon as $n$ exceeds $2$, owing to the complex manner in which the piecewise quadratic structure of (\ref{CF1}) intersects the domain $S_{n}$. Even with the assistance of modern computer algebra systems, we are only able to obtain explicit expressions up to $n=3$, in contrast to the case of $\varpi^{2}_{n}$, where such expressions are available up to $n=7$ (\cite{CF}). To be specific, for $n=1$, we have from (\ref{CF1}) that $\varpi^{1}_{1}=(U_{1}-1/2)^{2}+1/4$, and so
\begin{equation}\label{CDF1n1}
  F_{\varpi^{1}_{1}}(x)=
  \begin{cases}
    0,&\quad x\in\big(0,\frac{1}{4}\big] \\
    \sqrt{4x-1},&\quad x\in\big(\frac{1}{4},\frac{1}{2}\big] \\
    1,&\quad x>1/2.
  \end{cases}
\end{equation}
For $n=2$, the resulting formula is already quite complex. Based on (\ref{CF1}), by writing
\begin{equation*}
  \varpi^{1}_{2}=\sqrt{2}
  \begin{cases}
    \big(U_{(1)}-\frac{1}{4}\big)^{2}-\frac{U_{(2)}}{2}+\frac{7}{16},&\quad U_{(2)}\leq\frac{1}{2} \\
    \big(U_{(1)}-\frac{1}{4}\big)^{2}+\big(U_{(2)}-\frac{3}{4}\big)^{2}+\frac{1}{8},&\quad U_{(1)}\leq\frac{1}{2}<U_{(2)} \\
    \big(U_{(2)}-\frac{3}{4}\big)^{2}+\frac{U_{(1)}}{2}-\frac{1}{16},&\quad U_{(1)}>\frac{1}{2},
  \end{cases}
\end{equation*}
for $0\leq U_{(1)}\leq U_{(2)}\leq1$ and carrying out the integration in (\ref{CDF1}) over $S_{2}$, we deduce that
\begin{equation}\label{CDF1n2}
  F_{\varpi^{1}_{2}}(x)=
  \begin{cases}
    0,&\; x\in\big(0,\frac{1}{4\sqrt{2}}\big] \\
    \pi\big(\sqrt{2}x-\frac{1}{4}\big),&\; x\in\big(\frac{1}{4\sqrt{2}},\frac{3}{8\sqrt{2}}\big] \\
    \pi\big(\sqrt{2}x-\frac{1}{4}\big)+\frac{4}{3}x\sqrt{16\sqrt{2}x-6}-(4\sqrt{2}x-1)\arctan\sqrt{8\sqrt{2}x-3},&\; x\in\big(\frac{3}{8\sqrt{2}},\frac{1}{2\sqrt{2}}\big] \\
    \sqrt{2}x+\frac{1}{8}+\frac{7}{12}\sqrt{2\sqrt{2}x-1}-\frac{2}{3}\sqrt{4\sqrt{2}x-2} \\
    \quad+\frac{1}{3}x\sqrt{16\sqrt{2}x-6-8\sqrt{2\sqrt{2}x-1}}-\frac{1}{8}\sqrt{8\sqrt{2}x-3-4\sqrt{2\sqrt{2}x-1}} \\
    \quad-\frac{1}{6}\sqrt{(2\sqrt{2}x-1)(8\sqrt{2}x-3-4\sqrt{2\sqrt{2}x-1})},&\; x\in\big(\frac{1}{2\sqrt{2}},\frac{5}{8\sqrt{2}}\big] \\
    \sqrt{2}x+\frac{1}{8}+\frac{7}{12}\sqrt{2\sqrt{2}x-1}-\frac{2}{3}\sqrt{4\sqrt{2}x-2} \\
    \quad-\frac{1}{3}x\sqrt{16\sqrt{2}x-6-8\sqrt{2\sqrt{2}x-1}}+\frac{1}{8}\sqrt{8\sqrt{2}x-3-4\sqrt{2\sqrt{2}x-1}} \\
    \quad+\frac{1}{6}\sqrt{(2\sqrt{2}x-1)(8\sqrt{2}x-3-4\sqrt{2\sqrt{2}x-1})},&\; x\in\big(\frac{5}{8\sqrt{2}},\frac{1}{\sqrt{2}}\big] \\
    1,&\; x>\frac{1}{\sqrt{2}}.
  \end{cases}
\end{equation}
Similarly, for $n=3$, (\ref{CF1}) gives that
\begin{equation*}
  \varpi^{1}_{3}=\sqrt{3}
  \begin{cases}
    \big(U_{(1)}-\frac{1}{6}\big)^{2}-\frac{U_{(2)}}{3}-\frac{U_{(3)}}{3}+\frac{17}{36},&\quad U_{(3)}\leq\frac{2}{3},\;U_{(2)}\leq\frac{1}{3} \\
    \big(U_{(1)}-\frac{1}{6}\big)^{2}+\big(U_{(2)}-\frac{1}{2}\big)^{2}-\frac{U_{(3)}}{3}+\frac{1}{3},&\quad U_{(3)}\leq\frac{2}{3},\;U_{(2)}>\frac{1}{3},\;U_{(1)}\leq\frac{1}{3} \\
    \big(U_{(2)}-\frac{1}{2}\big)^{2}+\frac{U_{(1)}}{3}-\frac{U_{(3)}}{3}+\frac{1}{4},&\quad U_{(3)}\leq\frac{2}{3},\;U_{(1)}>\frac{1}{3} \\
    \big(U_{(1)}-\frac{1}{6}\big)^{2}+\big(U_{(3)}-\frac{5}{6}\big)^{2}-\frac{U_{(2)}}{3}+\frac{2}{9},&\quad U_{(3)}>\frac{2}{3},\;U_{(2)}\leq\frac{1}{3} \\
    \big(U_{(1)}-\frac{1}{6}\big)^{2}+\big(U_{(2)}-\frac{1}{2}\big)^{2}+\big(U_{(3)}-\frac{5}{6}\big)^{2}+\frac{1}{12},&\quad U_{(1)}\leq\frac{1}{3}<U_{(2)}\leq\frac{2}{3}<U_{(3)} \\
    \big(U_{(2)}-\frac{1}{2}\big)^{2}+\big(U_{(3)}-\frac{5}{6}\big)^{2}+\frac{U_{(1)}}{3},&\quad U_{(3)}>\frac{2}{3},\;U_{(2)}\leq\frac{2}{3},\;U_{(1)}>\frac{1}{3} \\
    \big(U_{(1)}-\frac{1}{6}\big)^{2}+\big(U_{(3)}-\frac{5}{6}\big)^{2}+\frac{U_{(2)}}{3}-\frac{1}{9},&\quad U_{(2)}>\frac{2}{3},\;U_{(1)}\leq\frac{1}{3},
  \end{cases}
\end{equation*}
for $0\leq U_{(1)}\leq U_{(2)}\leq U_{(3)}\leq1$, for which the distribution function $\PP\{\varpi^{1}_{3}\leq x\}$ is too lengthy for a proper display here.

Because of this complexity, for all $n\geq4$, simulation methods are generally preferred in practice, and instead of attempting to evaluate the simplex integral in (\ref{CDF1}) directly, one repeatedly generates ordered uniform samples and computes $\varpi^{1}_{n}$ via the computational form (\ref{CF1}), which produces accurate approximations to its distribution with minimal computational cost. To give a numerical illustration, we generate $10^{5}$ Monte Carlo simulations of $\varpi^{1}_{n}$ using (\ref{CF1}) with $n=10$ and adopt their empirical distribution function as an approximation of the function $F_{\varpi^{1}_{10}}$. Figure \ref{fig:2} displays this (approximate) distribution function alongside three other distributions: the exact distribution functions $F_{\varpi^{1}_{1}}$ and $F_{\varpi^{1}_{2}}$ obtained from formulae (\ref{CDF1n1}) and (\ref{CDF1n2}), respectively, and the limit null distribution function $F_{\varpi^{1}_{\infty}}$ from (\ref{LCDF.1}), with subscripts emphasizing $\omega^{1}_{n}=\varpi^{1}_{n}$.

\begin{figure}[H]
  \centering
  \includegraphics[scale=0.5]{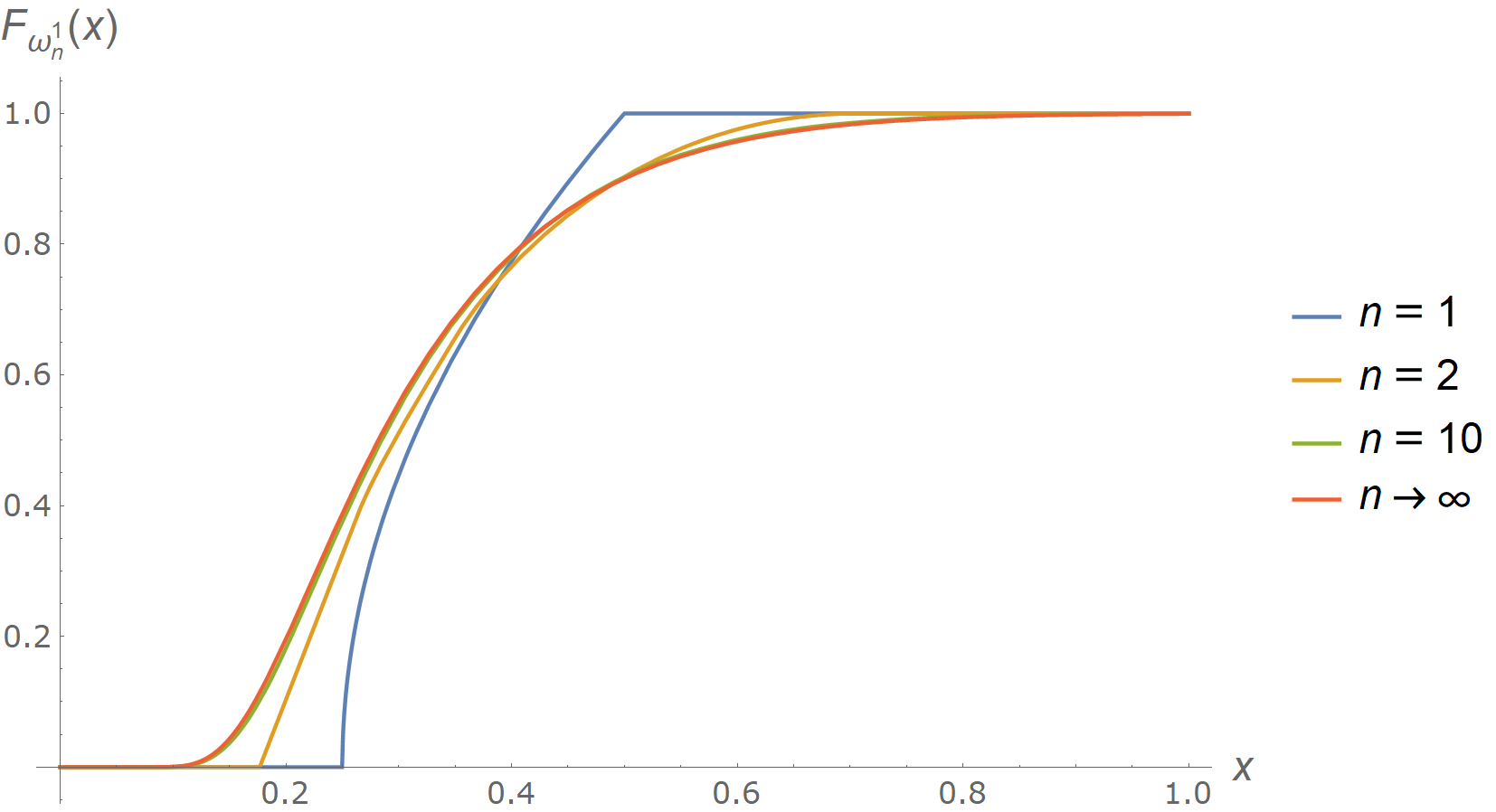}
  \caption{Comparison of null distributions of $\omega^{1}_{n}=\varpi^{1}_{n}$}
  \label{fig:2}
\end{figure}




\medskip

\section{Khmaladze transformation}\label{sec:3}

In this section we apply the Khmaladze transformation to the omega-1 test. As mentioned in Section \ref{sec:1}, the Khmaladze transformation serves the purpose of removing the distributional dependency of the test statistic $\omega^{1}_{n,\theta}$, while not inherently relying on the actual form of the test. If $\theta$ is estimated or hypothesized along with $\mathrm{H}_{0}$, making it a composite hypothesis, or if the requirement that $F_{\theta}$ is absolutely continuous in (\ref{EP}) is dropped, the limit distribution of $\omega^{1}_{n,\theta}$ as $n\to\infty$ is no longer distribution-free. In comparison, its Khmaladze-transformed counterpart, namely $w^{1}_{n,\theta}$ from (\ref{Wp}), remains to have a distribution-free limit.

When implementing the Khmaladze transformation, a major challenge comes in evaluating the integrals in (\ref{Comp}) for the transformed empirical process (\ref{TEP}), with the nonlinearity of the score function $s_{\theta}$ and state-dependent inversion of the Fisher information matrix $\mathcal{I}_{\theta}$ making it a generally difficult task -- even in simple parametric families. By adopting the change of the variables to the general case (\ref{varpi}) as done in (\ref{varpia}), we immediately get an alternative representation for the Khmaladze-transformed test statistic,
\begin{equation*}
  w^{p}_{n,\theta}=\bigg(\int^{1}_{0}|V_{n,\theta}(F^{-1}_{\theta}(t))-\mathcal{K}_{n,\theta}(F^{-1}_{\theta}(t))|^{p}\dd t\bigg)^{1/p},
\end{equation*}
where $\mathcal{K}_{n,\theta}$ is the empirical process compensator and $F^{-1}_{\theta}(q)=\inf\{x\in\mathbb{R}:\;F_{\theta}(x)\geq q\}$, $q\in[0,1]$, is the hypothesized quantile function. Judging from the integrand in (\ref{Comp}), it is implausible to expect a general computational form for (\ref{Wp}) comparable to (\ref{CF1}) in Theorem \ref{thm:1}, and computation of (\ref{Wp}) requires evaluating (\ref{TEP}) with (\ref{Comp}) numerically and should be carried out on a case-by-case basis.

In comparison, as $n\to\infty$, $w^{p}_{n,\theta}$ regains its distribution-free limit, which can be linked to the $\mathrm{L}^{p}$-functional of the standard Brownian motion $W$. More specifically, the following convergence is a consequence of \cite[\text{Sect.} 2]{K1}:
\begin{equation*}
  w^{p}_{n,\theta}\overset{\rm d}{\to}w^{p}_{\infty}:=\bigg(\int^{1}_{0}|W(t)|^{p}\dd t\bigg)^{1/p}.
\end{equation*}
When $p=\infty$ and $p=2$, respectively, the distributions of $w^{\infty}_{\infty}$ and $w^{2}_{\infty}$ are also well-known (see, e.g., \cite[\text{Eq.} 1.15.8.(1)]{BS} and \cite[\text{Eq.} 4.15]{X1}),\footnote{The distribution of $(w^{2}_{\infty})^{2}$ is also known as the Cameron--Martin distribution.} with the following CDFs:
\begin{align*}
  F_{w^{\infty}_{\infty}}(x)&=\frac{1}{2}\sum^{\infty}_{k=-\infty}\bigg(\erfc\frac{(4k-1)x}{\sqrt{2}}-\erfc\frac{(4k+1)x}{\sqrt{2}}\bigg), \\
  F_{w^{2}_{\infty}}(x)&=\sqrt{2}\sum^{\infty}_{k=0}\binom{-1/2}{n}\erfc\frac{4k+1}{2\sqrt{2}x},\quad x>0,
\end{align*}
where $\erfc\equiv\erfc(\cdot)$ denotes Gauss' complementary error function. More importantly, in the case $p=1$, an explicit formula for the distribution function of $w^{1}_{\infty}$ has been recently obtained in \cite[\text{Thm.} 3]{XZ}, though it is somewhat involved. We have
\begin{align}\label{LCDF.1.KT}
  F_{w^{1}_{\infty}}(x)&=\frac{1}{3\sqrt{2\pi}x}\sum^{\infty}_{k=1}\frac{\sqrt{-\alpha'_{k}}}{\Ai(\alpha'_{k})} \bigg(\frac{1}{3}+\frac{(-\alpha'_{k})}{3^{1/3}} \bigg(\frac{1}{3^{1/3}\Gf(2/3)}{\;_{1}\F_{2}}\bigg(\frac{1}{3};\frac{2}{3},\frac{4}{3}\bigg|\frac{(\alpha'_{k})^{3}}{9}\bigg) \nonumber\\
  &\quad+\frac{(-\alpha'_{k})}{2\Gf(1/3)}{\;_{1}\F_{2}}\bigg(\frac{2}{3};\frac{4}{3},\frac{5}{3}\bigg|\frac{(\alpha'_{k})^{3}}{9}\bigg)\bigg) \G^{0,3}_{3,2}\bigg(
  \begin{array}{ccccc}
    5/6 & 7/6 & 3/2 &  &  \\
     &  &  & 1/2 & 1
  \end{array}
  \bigg|\frac{27x^{2}}{2(-\alpha'_{k})^{3}}\bigg),\quad x\geq0,
\end{align}
where ${\;_{1}\F_{2}}\equiv{\;_{1}\F_{2}}(\cdot;\cdot,\cdot|\cdot)$ is a hypergeometric function. Due to the conventional contour integral definition of the Meijer G function, evaluating (\ref{LCDF.1.KT}) is computationally more intense than (\ref{LCDF.1}). For arbitrarily large values of $x$, an asymptotic formula is also available (see \cite{T3} and also \cite[\text{Thm.} 6]{XZ}),
\begin{equation*}
  \PP(w^{1}_{\infty}\leq x)\sim1-\sqrt{\frac{2}{3\pi}}\frac{1}{x}e^{-3x^{2}/2},\quad\text{as }x\rightarrow\infty,
\end{equation*}
also with relative error $O(x^{-2})$. It was also shown (\text{Prop.} 1 \text{ibid.}) that to truncate the series in (\ref{LCDF.1.KT}), one can use the same type of geometric series bound as in Corollary \ref{cor:1}, i.e., if $F_{w^{1}_{\infty}}(x)=\sum^{\infty}_{k=1}\tilde{H}_{k}(x)$,
\begin{equation*}
  \bigg|\sum^{\infty}_{k=K+1}\tilde{H}_{k}(x)\bigg|<\frac{|\tilde{H}_{K}(x)\tilde{H}_{K+1}(x)|}{|\tilde{H}_{K}(x)|-|\tilde{H}_{K+1}(x)|}.
\end{equation*}
Because of this, similar to (\ref{LCDF.1}), truncating the series at $k=6$ ensures an absolute error less than $10^{-5}$ for all $x\in[0,2]$, which would be sufficient for tabulation purposes.

\smallskip

In the following illustration, we adopt the transformed empirical process $\widetilde{V}_{n,\theta}$ under the symmetric Laplace distribution, as has recently been discussed in depth in \cite{RHS}. In this setting, convenient semi-closed forms are available for the compensator (\ref{Comp}), which significantly stabilize numerical integration by avoiding explicit matrix inversion in the integrand; we refer to \cite[\text{Sect.} 2]{RHS} for more details. Hence, following the procedures therein, in Figure \ref{fig:3} we compare the empirical distribution function of the resulting (transformed) omega-1 test statistic $w^{1}_{n,\theta}$ for $n=100$ and $\theta=\hat{\theta}_{n}$ (maximum likelihood estimators), when the true distribution is a standard Laplace distribution (with scale parameter $1$), based on $10^{4}$ Monte Carlo simulations, against the limit distribution in (\ref{LCDF.1.KT}). The former is taken as an approximation of the distribution function of $w^{1}_{n,\theta}$ under $\mathrm{H}_{0}$, which is simply denoted as $F_{w^{1}_{n,\theta}}$.

\begin{figure}[H]
  \centering
  \includegraphics[scale=0.5]{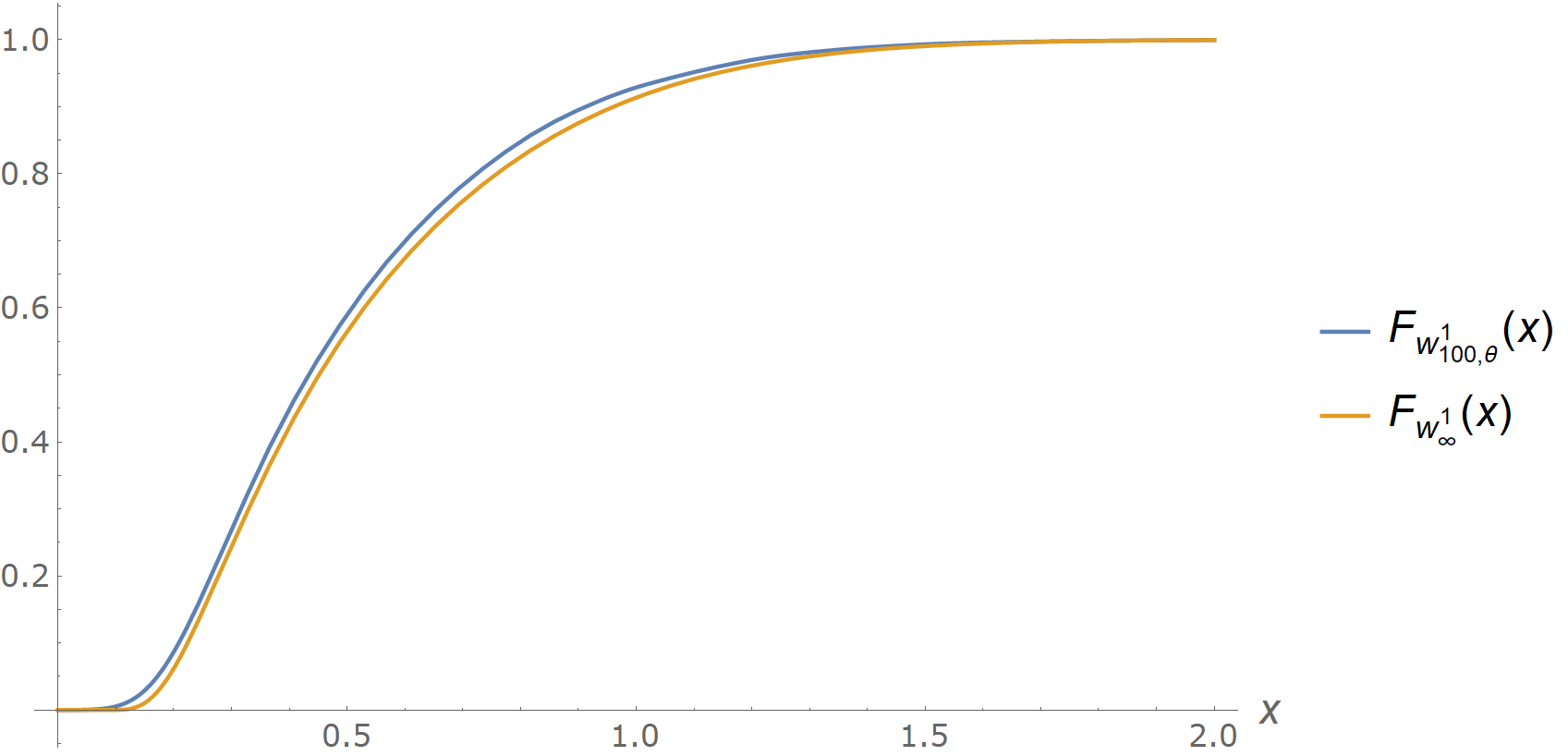}
  \caption{Comparison of (null) distributions of $w^{1}_{n}$ for $n=100$ under Laplace distribution ($\theta=\hat{\theta}_{n}$) and $w^{1}_{\infty}$}
  \label{fig:3}
\end{figure}

This illustration mainly aims at highlighting the even-faster convergence to the asymptotic limit, which feature is consistent with the behavior of the untransformed statistic depicted in Figure \ref{fig:2}. What is observed is also directly comparable to \cite[\text{Figs.} 1 \& 2]{RHS}, which together provide empirical evidence of accelerated convergence as $p$ decreases (with $1$ being the smallest possible value). As a consequence, the omega-1 statistic can seamlessly integrate with the Khmaladze transformation, whenever (\ref{Comp}) can be efficiently computed, by which the limit distribution in (\ref{LCDF.1.KT}) can serve as a universal reference distribution tabulable for moderate to large sample sizes.


\bigskip

\begin{appendices}

\section{An alternative computational form for omega-1 test statistic}\label{A}

\renewcommand{\theequation}{A.\arabic{equation}}

Following (\ref{G}) and (\ref{TECDF}), for every $t\in[0,1]$, consider the set $A_{n}(t):=\{G_{n}(t)\geq0\}$, and observe that under $\mathrm{H}_{0}$, it holds that up to null sets,
\begin{equation*}
  A_{n}(t)=\bigcup^{n}_{j=1}\bigg\{t\in\bigg[\frac{j-1}{n},a_{j}(U_{(j)})\bigg)\bigg\},\quad t\in[0,1],
\end{equation*}
with a restriction function $a_{j}(x)=\max\{\min\{x,j/n\},(j-1)/n\}$, $x\in[0,1]$. Thus, with $(\cdot)^{+}$ denoting the positive part,
\begin{align}\label{Gp}
  \int^{1}_{0}G^{+}_{n}(t)\dd t&=\sqrt{n}\sum^{n}_{j=1}\int^{a_{j}(U_{(j)})}_{(j-1)/n}(\breve{F}_{n}(t)-t)\dd t \nonumber\\
  &=\sqrt{n}\sum^{n}_{j=1}\bigg(\frac{1}{2}\bigg(a^{2}_{j}(U_{j})-\frac{(j-1)^{2}}{n^{2}}\bigg) \nonumber\\
  &\qquad-\frac{1}{n}\sum^{n}_{m=1}\bigg((a_{j}(U_{(j)})-U_{(m)})^{+}-\bigg(\frac{j-1}{n}-U_{(m)}\bigg)^{+}\bigg)\bigg).
\end{align}
Similarly, note that up to null sets,
\begin{equation*}
  [0,1]\setminus A_{n}(t)=\bigcup^{n}_{j=1}\bigg\{a_{j}(U_{(j)})\leq t<\frac{j}{n}\bigg\},\quad t\in[0,1],
\end{equation*}
so we have
\begin{align}\label{Gm}
  \int^{1}_{0}G^{-}_{n}(t)\dd t&=\sqrt{N}\sum^{n}_{j=1}\int^{j/n}_{a_{j}(U_{(j)})}(\breve{F}_{n}(t)-t)\dd t \nonumber\\
  &=\sqrt{n}\sum^{n}_{j=1}\bigg(\frac{1}{2}\bigg(a^{2}_{j}(U_{(j)})-\frac{j^{2}}{n^{2}}\bigg) -\frac{1}{n}\sum^{n}_{m=1}\bigg((a_{j}(U_{(j)})-U_{(m)})^{+}-\bigg(\frac{j}{n}-U_{(m)}\bigg)^{+}\bigg)\bigg).
\end{align}
Adding up (\ref{Gp}) and (\ref{Gm}) we then obtain
\begin{align*}
  \varpi^{1}_{n}&=\sqrt{n}\sum^{n}_{j=1}\bigg(a^{2}_{j}(U_{(j)})-\frac{1}{n}\sum^{n}_{m=1}\bigg(2(a_{j}(U_{(j)})-U_{(m)})^{+} -\bigg(\frac{j-1}{n}-U_{(m)}\bigg)^{+}-\bigg(\frac{j}{n}-U_{(m)}\bigg)^{+}\bigg)\bigg) \\
  &\quad-\frac{2n^{2}+1}{6\sqrt{n}},
\end{align*}
which gives another computational form alternative to (\ref{CF1}).

\medskip

\section{A computational form for Wasserstein 1-distance}\label{B}

As mentioned in Subsection \ref{sec:2.1}, the statistic $\omega^{1}_{n}$ is equivalent to the Wasserstein 1-distance between the empirical distribution of an \text{i.i.d.} uniform sample (as in (\ref{TECDF})) and the standard uniform distribution. In the following we present a computational form for the Wasserstein 1-distance between a general empirical distribution and theoretical (reference) distribution.

Consider the setting of (\ref{EP}). By following the proof of Theorem \ref{thm:1} we obtain
\begin{align*}
  \mathcal{W}_{1}(F_{n},F_{\theta})&:=\int_{\mathbb{R}}|F_{n}(t)-F_{\theta}(t)|\dd t \\
  &=\int^{1}_{0}|F^{-1}_{\theta}(u)-F^{-1}_{n}(u)|\dd t \\
  &=\sum^{n}_{j=1}\int^{j/n}_{(j-1)/n}|F^{-1}_{\theta}(u)-X_{(j)}|\dd u \\
  &=\sum^{n}_{j=1}\int^{F^{-1}_{\theta}(j/n)}_{F^{-1}_{\theta}((j-1)/n)}|t-X_{(j)}|\dd F_{\theta}(t),
\end{align*}
where $F_{n}$ is the empirical CDF of the \text{i.i.d.} sample $X$ and $F^{-1}_{\theta}(q):=\inf\{t\in[0,1]:\;F_{\theta}(t)\geq q\}$, $q\in[0,1]$, is the theoretical quantile function. Thus, if $F_{\theta}$ is also continuous, we have the computational form
\begin{equation}\label{W1}
  \mathcal{W}_{1}(F_{n},F_{\theta})=\sum^{n}_{j=1}G_{\theta}\bigg(F^{-1}_{\theta}\bigg(\frac{j-1}{n}\bigg), F^{-1}_{\theta}\bigg(\frac{j}{n}\bigg);X_{(j)}\bigg),
\end{equation}
where for $0\leq a\leq b\leq1$ and $x\in\mathbb{R}$,
\begin{equation}\label{GF}
  G_{\theta}(a,b;x):=
  \begin{cases}
    \displaystyle g_{\theta}(b,x)-g_{\theta}(a,x),&\quad \displaystyle x\leq a  \\
    \displaystyle g_{\theta}(a,x)-2g_{\theta}(x,x)+g_{\theta}(b,x),&\quad \displaystyle a<x\leq b \\
    \displaystyle g_{\theta}(a,x)-g_{\theta}(b,x),&\quad \displaystyle x>b,
  \end{cases}
  \quad g_{\theta}(t,x)=\int(t-x)\dd F_{\theta}(t),
\end{equation}
which can be taken as a functional extension of (\ref{CF1c}), where in particular $F_{\theta}(t)=t$ and $X=U$.

Therefore, the computational form (\ref{W1}) admits a closed-form expression whenever both the antiderivative $g_{\theta}$ and the quantile function $F^{-1}_{\theta}$ in (\ref{GF}) are available in closed form, which property is satisfied by most standard continuous distributions -- including the generalized normal distribution family (see \cite{N}). Indeed, consider without loss of generality the centered case with location parameter $\mu=0$, along with general scale parameter $\alpha>0$ and shape parameter $\beta>0$. Then, we have that up to a constant shift,
\begin{align*}
  g_{\theta}(t,x)&=\int^{t}_{-\infty}\frac{\beta(y-x)}{2\alpha\Gf(1/\beta)}e^{-(|y-\mu|/\alpha)^{\beta}}\dd y \\
  &=\frac{x\sgn(t)\Gf(1/\beta,(|t|/\alpha)^{\beta})-\alpha\Gf(2/\beta,(|t|/\alpha)^{\beta})}{2\Gamma(1/\beta)}-x\mathbf{1}(t\geq0),\quad t\in[0,1],\;x\in\mathbb{R},
\end{align*}
where $\Gf(\cdot,\cdot)$ denotes the upper incomplete gamma function, while
\begin{equation*}
  F^{-1}_{\theta}(q)=\alpha\mathrm{Q}^{1/\beta}\big(\tfrac{1}{\beta},1-2\big|q-\tfrac{1}{2}\big|\big),\quad q\in[0,1],
\end{equation*}
with the inverse regularized (upper) incomplete gamma function $\mathrm{Q}\equiv\mathrm{Q}(\cdot,\cdot)$. Plugging these back in (\ref{W1}) yields a closed-form expression for the (1D) Wasserstein 1-distance between an empirical distribution and a generalized normal distribution.

Generally speaking, evaluating (\ref{W1}) numerically is also much more stable than direct integration involving the CDFs, as the latter features an integrand with numerous jumps due to the empirical CDF. In this consideration, the formula (\ref{W1}) along with (\ref{GF}) provides immediate computational benefits for related applications centered around the Wasserstein 1-distance -- notably generative adversarial networks and domain adaptation.

\medskip

\section{Omega-1 table}\label{C}

For reference, the below table provides critical values of the omega-1 test -- specifically, values of $x$ such that $\PP\{\omega^{1}_{n}\leq x\}=\alpha$ (significance level) under $\mathrm{H}_{0}$, for sample sizes up to $n=20$. The third to fifth rows are obtained by utilizing the explicit formulas mentioned in Subsection \ref{sec:2.3}, and the last row by implementing (\ref{LCDF.1}) with the series truncated at $k=4$. The rows in between are constructed from $10^{5}$ Monte Carlo simulations via the computational form (\ref{CF1}), following the same approach used for Figure \ref{fig:2}. In view of standard KS and CvM table conventions, the reported values are rounded to three decimal places.

\begin{table}[H]\small
  \centering
  \caption{Critical values of omega-1 test}
  \label{tab:1}
  \begin{tabular}{c|c|c|c|c|c}
    \Xhline{2\arrayrulewidth}
    \multirow{2}{*}{Sample size ($n$)} & \multicolumn{5}{c}{significance level ($\alpha$)} \\ \cline{2-6}
    & 20\% & 15\% & 10\% & 5\% & 1\% \\ \hline
    1 & 0.26 & 0.256 & 0.252 & 0.251 & 0.25 \\
    2 & 0.222 & 0.211 & 0.199 & 0.188 & 0.179 \\
    3 & 0.215 & 0.201 & 0.187 & 0.171 & 0.153 \\
    4 & 0.211 & 0.198 & 0.184 & 0.166 & 0.144 \\
    5 & 0.209 & 0.196 & 0.181 & 0.164 & 0.139 \\
    6 & 0.208 & 0.195 & 0.18 & 0.162 & 0.136 \\
    7 & 0.207 & 0.194 & 0.179 & 0.16 & 0.134 \\
    8 & 0.207 & 0.193 & 0.178 & 0.16 & 0.133 \\
    9 & 0.206 & 0.192 & 0.177 & 0.158 & 0.132 \\
    10 & 0.205 & 0.192 & 0.177 & 0.158 & 0.132 \\
    11 & 0.204 & 0.191 & 0.176 & 0.157 & 0.131 \\
    12 & 0.205 & 0.191 & 0.176 & 0.158 & 0.131 \\
    13 & 0.204 & 0.191 & 0.176 & 0.157 & 0.13 \\
    14 & 0.204 & 0.19 & 0.176 & 0.157 & 0.13 \\
    15 & 0.204 & 0.19 & 0.175 & 0.157 & 0.129 \\
    16 & 0.203 & 0.19 & 0.175 & 0.156 & 0.129 \\
    18 & 0.203 & 0.19 & 0.175 & 0.156 & 0.128 \\
    20 & 0.203 & 0.189 & 0.174 & 0.155 & 0.128 \\ \hline
    limit & 0.201 & 0.187 & 0.172 & 0.153 & 0.126 \\
    \Xhline{2\arrayrulewidth}
  \end{tabular}
\end{table}

\end{appendices}

\end{document}